\documentclass{amsart}
\usepackage{latexsym}
\usepackage{amsmath}
\usepackage{amssymb}
\usepackage[all]{xy}
\usepackage{easywncy}


\newtheorem{thm}{Theorem}[section]
\newtheorem{prop}[thm]{Proposition}

\newtheorem{cor}[thm]{Corollary}
\newtheorem{lem}[thm]{Lemma}
\theoremstyle{definition}
\newtheorem{defn}[thm]{Definition}

\newtheorem{assertion}[thm]{Assertion}

\theoremstyle{remark}
\newtheorem{rem}[thm]{Remark}
\newtheorem{ex}[thm]{Example}

%
%
%
%

%

%

%

%
%
\newcommand{\Z}{{\mathbb Z}}
\newcommand{\C}{{\mathcal C}}
\newcommand{\D}{{\mathcal D}}
\newcommand{\T}{{\mathcal T}}
\newcommand{\calK}{{\mathcal K}}
\newcommand{\E}{\mathcal{E}}
\newcommand{\F}{\mathcal{F}}
%

%
%

%

\newcommand{\mapright}[1]{%
\smash{\mathop{%
 \hbox to 1cm{\rightarrowfill}}\limits_{#1}}}
\newcommand{\maprightd}[2]{%
\smash{\mathop{%
 \hbox to 1.2cm{\rightarrowfill}}\limits^{#1}\limits_{#2}}}
\newcommand{\mapleft}[1]{%
\smash{\mathop{%
 \hbox to 1cm{\leftarrowfill}}\limits_{#1}}}
\newcommand{\mapleftu}[1]{%
\smash{\mathop{%
 \hbox to 0.8cm{\leftarrowfill}}\limits^{#1}}}
\newcommand{\maprightu}[1]{%
\smash{\mathop{%
 \hbox to 1cm{\rightarrowfill}}\limits^{#1}}}
\newcommand{\maprightud}[2]{%
\smash{\mathop{%
 \hbox to 1cm{\rightarrowfill}}\limits^{#1}_{#2}}}
\newcommand{\mapleftud}[2]{%
\smash{\mathop{%
 \hbox to 1cm{\leftarrowfill}}\limits^{#1}_{#2}}}

\newcommand{\eel}{\mathcyr{l}}
\newcommand{\QQQ}{\mathcal{Q}}

\newcommand{\id}{\mathop{\textit{id}}}

\newcommand{\Sets}{\mathsf{Sets}}
\newcommand{\xrightarrowrightarrow}[3][0.65ex]{\mathrel{\raisebox{#1}{\oalign{$\xrightarrow[]{#3}$\crcr$\xrightarrow[#2]{}$}}}}
\newcommand{\sss}{\mathfrak{s}}
\newcommand{\ttt}{\mathfrak{t}}


\newcounter{eqn}[section]

\def\theeqn{\textnormal{(\thesection.\arabic{eqn})}}

\def\eqnlabel#1{%
 \refstepcounter{eqn}%
 \label{#1}%
 \leqno{\theeqn}}

\begin{document}

\title{
A topos associated with a colored category \\
}

\author[K. Kuribayashi]{Katsuhiko KURIBAYASHI}
\address[Kuribayashi]{Department of Mathematical Sciences, Faculty of Science, Shinshu University, Matsumoto, Nagano 390-8621, Japan}
\email{kuri@math.shinshu-u.ac.jp}

\author[Y. Numata]{Yasuhiude NUMATA}
\address[Numata]{Department of Mathematical Sciences, Faculty of Science, Shinshu University, Matsumoto, Nagano 390-8621, Japan}
\email{nu@math.shinshu-u.ac.jp}


\footnote[0]{{\it 2010 Mathematics Subject Classification}: 18D99, 18F20, 16D90, 55N30, 05E30  \\
{\it Key words and phrases.} 
Schemoid, topos, cohomology, spectral sequence. }

\date{}

\begin{abstract} 
We show that a functor category whose domain is a colored category is a topos.
The topos structure enables us to introduce cohomology of colored categories including quasi-schemoids. 
If the given colored category arises from an association scheme, then the cohomology 
coincides with the group cohomology of the factor scheme by the thin residue.  Moreover, it is shown that 
the cohomology of a colored category relates to the standard representation of an association scheme via the Leray spectral sequence. 
\end{abstract}

\maketitle

\section{Introduction} 

Quasi-schemoids have been introduced in \cite{K-Matsuo} generalizing the notion of an {\it association scheme}  
\cite{B-I, P-Z, Z_book} from 
a small categorical point view. 
In a nutshell, the new object is a small category whose morphisms are colored with 
appropriate combinatorial data. Strong homotopy and representation theory for quasi-schemoids are developed 
in \cite{K} and \cite{K-Momose}, respectively. 

Once neglecting the combinatorial data in a quasi-schemoid, we have a category with colored morphisms. In what follows, such a category is called a {\it colored category}. 
The main theorem (Theorem \ref{thm:main}) in this article enables one to give a topos structure 
to a functor category whose domain is a colored category and whose objects are functors to the category of sets preserving colors; see Section 2 for the precise definition of 
the functor category. 
In consequence, appealing to the topos structure, 
we define cohomology of a colored category; see Definition \ref{defn:cohomology}.  
We have the inclusion functor from the functor category mentioned above 
to the usual functor category of the underlying category of the colored one. 
Theorem \ref{thm:main} also asserts that the inclusion gives rise to a geometric morphism of topoi 
whose direct image functor seems to be the {\it sheafification}. 

Applying the cohomology functor to an association scheme, we obtain the group cohomology of the factor scheme by the thin residue; 
see Proposition \ref{prop:H}. Then, one might think that the cohomology is not novel for association schemes.
However, our attempt to introduce cohomology of colored categories is thought of as the first step to study various cohomologies 
for such objects encompassing quasi-schemoids and hence association schemes; 
see Remark \ref{rem:assertions} (ii).  

A morphism between colored categories gives rise to 
a geometric morphism between the topoi associated with the colored categories.  
Thus the Leray spectral sequence in topos theory may allow us to investigate cohomology 
of a colored category. In particular, we apply the spectral sequence for considering cohomology 
of a colored poset; see Example \ref{ex:computaiton} below. 
Moreover, adjoint functors induced by a morphism from an association scheme $(X, S_X)$ to a colored category $(\C, S)$
connect the functor category of $(\C, S)$ with the module category over the Bose--Mesner algebra of $(X, S_X)$. 
Thus, for example, the cohomology of a colored category relates to the standard representation of 
an association scheme via the Leray spectral sequence; see Theorem \ref{thm:LSS}. 

The article is organized as follows. In Section 2, we describe our main theorem. Then 
{\it orthodox} cohomology of a colored category is defined. In Section 3, we prove the main theorem.  
In Section 4, geometric morphisms are investigated in our framework.  
An application and computational examples of cohomology of colored categories are also described. 
Section 5 considers the relationship mentioned above between cohomology of a colored category and 
the standard representation of an association scheme. In the end of the section, observations and expectations for our work are described. 

\section{Main results}
We begin by recalling the definition of a quasi-schemoid. 
A quasi-schemoid will be referred to as a schemoid in this article. 
Let $\C$ be a small category 
and  $S$ a partition of the set $mor(\C)$ of all morphisms in $\C$; that is, $mor(\C) =\coprod_{\sigma \in S}\sigma$. 
We call such a pair $(\C, S)$ a {\it colored category}. 
Moreover, a colored category $(\C, S)$ is called a {\it schemoid} 
if for a triple $\sigma, \tau, \mu \in S$ 
and for any morphisms $f$, $g$ in $\mu$, one has a bijection 
$$
(\pi_{\sigma\tau}^\mu)^{-1}(f) \cong (\pi_{\sigma\tau}^\mu)^{-1}(g), 
\eqnlabel{add-1}
$$ 
where $\pi_{\sigma\tau}^\mu : \pi_{\sigma\tau}^{-1}(\mu) \to \mu$ denotes 
the restriction of the composition map 
$$\pi_{\sigma\tau} : \sigma \times_{ob(\C)}\tau:=\{(f, g) \in \sigma \times \tau \mid s(f) = t(g)\} \to mor(\C).$$ 
The cardinality $p_{\sigma\tau}^\mu$  of the set $(\pi_{\sigma\tau}^\mu)^{-1}(f)$ is called a {\it structure constant}. 
We refer the reader to \cite[Section 2]{K-Matsuo}, \cite[Appendix A]{K-Momose} and \cite{Numata} for examples of schemoids. 
In case of a schemoid $(\C, S)$ with $\sharp mor(\C) < \infty$, we define the {\it Bose--Mesner algebra} associated with  $(\C, S)$ 
by using the structure constants; see \cite{K-Matsuo}.

Let $(\C, S)$ and $(\D, S')$ be colored categories. Then a functor $u : \C \to \D$ between underlying categories is called a 
{\it morphism of colored categories}, denoted $u :  (\C, S) \to (\D, S')$, if for $\sigma \in S$, there exists an element $\tau \in S'$ 
such that $u(\sigma) \subset \tau$. Observe that such an element $\tau$ is determined uniquely because $S'$ is a partition of 
$mor(\D)$.

Let $\T$ denote the category $\mathsf{Mod}$ of $\Z$-modules or the category $\mathsf{Sets}$ of sets. 
Though  $\T$ is not small, we regard it as a colored category whose morphisms have distinct colors. 
For morphisms $f$ and $g$ in a schemoid $(\C, S)$, 
we say that  $f$ is {\it equivalent} to $g$, denoted $f \sim_S g$, if $f$ and $g$ are contained in a common set 
$\sigma \in S$. 
For morphisms  $u, v : (\C,S) \to \T$ of colored categories, a natural transformation 
$\eta : u   \Rightarrow  v$ is called {\it locally constant} if $\eta_x = \eta_y$ whenever $id_x \sim_S id_y$.  
We define $\T^{(\C,S)}$ to be a category whose objects are morphisms of colored categories from $(\C,S)$ to $\T$ and 
whose morphisms are locally constant natural transformations; see \cite[Definition 2.3]{K-Momose} and the previous comments. 

We define an equivalence relation on the set of objects in $\C$. 
Let $\sim$ be a relation in $ob \ \C$ defined by $x \sim y$ if there exist a cell $\sigma \in S$ and morphisms $f$ and $g$ in $\sigma$ such that $(x, y) =(s(f), s(g))$ or $(x, y) =(t(f), t(g))$. 
We have the equivalence relation $\stackrel{0}{\sim}$ generated by the relation 
$\sim$ above. 
Let $\natural\T^{(\C, S)}$ be the wide subcategory of $\T^{(\C, S)}$ consisting of all morphisms $\eta$ satisfying 
the condition that $\eta_x = \eta_y$ if $x \stackrel{0}{\sim} y$. Observe that every morphism in $\natural\T^{(\C, S)}$ is locally constant 
and by definition, $ob (\natural\T^{(\C, S)}) = ob (\T^{(\C, S)})$. We have a sequence of inclusion functors 
$$
\natural\T^{(\C, S)} \subset \T^{(\C, S)} \subset \T^\C, 
$$
where $T^\C$ denotes the usual functor category on $\C$.  The first two categories coincide in some case. 

\begin{defn}\label{defn:mild_object}
A colored category $(\C, S)$ is called a {\it naturally colored category} if the following condition is satisfied: 
$id_{t(f)} \sim_S id_{t(g)}$ and  $id_{s(f)} \sim_S id_{s(g)}$ whenever 
$f$ and $g$ are in $\sigma$ for some $\sigma \in S$, .  A naturally colored category $(\C, S)$ 
is a {\it natural schemoid} if it moreover is a schemoid; that is, $(\C, S)$ is endowed with the bijection (2.1) for each triple of elements in $S$. 
\end{defn}

\begin{prop}\label{prop:equality} 
Let $(\C, S)$ be a naturally colored category. Then the functor category $\T^{(\C, S)}$ coincides with 
the subcategory $\natural\T^{(\C, S)}$. 
\end{prop}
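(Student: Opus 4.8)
The plan is to compare the two categories hom-set by hom-set. First I would note that, by construction, $\natural\T^{(\C,S)}$ and $\T^{(\C,S)}$ have exactly the same objects — all morphisms of colored categories from $(\C,S)$ to $\T$ — and that $\natural\T^{(\C,S)}$ is a wide subcategory of $\T^{(\C,S)}$, since every morphism of $\natural\T^{(\C,S)}$ is locally constant, as already observed in the text. Hence the proposition reduces to the converse inclusion of morphisms: every locally constant natural transformation $\eta : u \Rightarrow v$ between functors $u, v : (\C,S) \to \T$ of colored categories must satisfy $\eta_x = \eta_y$ whenever $x \stackrel{0}{\sim} y$; equivalently, the hom-sets of the two categories coincide.

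To establish this I would argue as follows. Fix such an $\eta$ and consider the relation $R_\eta = \{(x,y) \in ob(\C) \times ob(\C) \mid \eta_x = \eta_y\}$. This is visibly an equivalence relation on $ob(\C)$ (reflexive, symmetric and transitive). Since $\stackrel{0}{\sim}$ is, by definition, the equivalence relation generated by the relation $\sim$ on $ob(\C)$, it suffices to show that $R_\eta$ contains $\sim$; that is, $\eta_x = \eta_y$ whenever $x \sim y$.

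Now suppose $x \sim y$. By definition of $\sim$ there are a cell $\sigma \in S$ and morphisms $f, g \in \sigma$ with $(x,y) = (s(f), s(g))$ or $(x,y) = (t(f), t(g))$. This is where the hypothesis that $(\C,S)$ is naturally colored is used: the membership $f, g \in \sigma$ forces $id_{s(f)} \sim_S id_{s(g)}$ and $id_{t(f)} \sim_S id_{t(g)}$, so in either of the two cases we obtain $id_x \sim_S id_y$. Local constancy of $\eta$ then gives $\eta_x = \eta_y$, as needed. Combining the two inclusions of hom-sets with the equality of object sets yields $\T^{(\C,S)} = \natural\T^{(\C,S)}$.

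I do not expect a genuine obstacle here; the argument is essentially an unwinding of definitions. If anything, the one point deserving care is the reduction from the generated equivalence relation $\stackrel{0}{\sim}$ to its generating relation $\sim$, made legitimate by the observation that $R_\eta$ is itself an equivalence relation — this is precisely what lets the "naturally colored" condition do its job, since without it a locally constant transformation need not be constant on $\stackrel{0}{\sim}$-classes. It is also worth checking explicitly, when verifying $x \sim y \Rightarrow id_x \sim_S id_y$, that both the source case and the target case in the definition of $\sim$ are covered, which they are, since the naturally colored condition asserts the corresponding relations for $s$ and for $t$ simultaneously.
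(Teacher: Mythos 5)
Your argument is correct and is essentially the paper's proof, just written out in more detail: the paper's one-line justification is the equivalence ``$x\stackrel{0}{\sim}y$ if and only if $id_x\sim_S id_y$'' for a naturally colored category, which is exactly the content of your reduction from $\stackrel{0}{\sim}$ to the generating relation $\sim$ via the observation that the set of pairs on which $\eta$ agrees (equivalently, the restriction of $\sim_S$ to identities) is already an equivalence relation. Your explicit handling of the source and target cases and of the generated relation fills in precisely the steps the paper leaves implicit.
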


\begin{proof}
Since $x\stackrel{0}{\sim}y$ if and only if $id_x\sim_S id_y$, it follows that each morphism in $\T^{(\C, S)}$ is in the wide subcategory. 
We have the result. 
\end{proof} 

In general, the category $\T^{(\C, S)}$ is larger than $\natural\T^{(\C, S)}$ while the classes of objects coincide; see Example \ref{ex:pullbacks}.  

We recall the definition of a {\it tame} schemoid introduced in \cite[page 229]{K-Momose}. 
For a schemoid $(\C, S)$, we consider the following conditions T(i), T(ii) and T(iii). \\

\noindent
T(i): The schemoid $(\C, S)$ is unital, namely, for  $J_0 :=  \{id_x\}_{x \in ob\C}$,
$$
\{id_x\}_{x \in ob\C} = \displaystyle{\bigcup_{\alpha \in S, \alpha\cap J_0\neq \varnothing}}\alpha. 
$$

\noindent
T(ii):  For any $\sigma \in S$ and $f, g \in \sigma$, there exist $\tau_1$ and $\tau_2$ in $S$ such that 
$id_{s(f)}, id_{s(g)} \in \tau_1$ and $id_{t(f)}, id_{t(g)} \in \tau_2$. \\

The third one is required to introduce a category $[\C]$ associated with a schmeoid $(\C, S)$,  
whose set of objects is defined by   
$$
ob [\C] = \{id_x\}_{x \in ob\C} \slash \sim_S \ =\{[x] \},  
$$ 
where we write $[x]$ for $[id_x]$.  Under the condition T(ii), for an element $\sigma \in S$, there exists a unique 
element $[x]$ in $ob[\C]$ such that $id_{s(f)} \in [x]$ for any $f \in \sigma$. In this case, we write $s(\sigma) \subset [x]$. 
Similarly, we write $t(\sigma) \subset [y]$ if $id_{t(f)} \in [y]$ for any $f \in \sigma$. 
Define a set of morphisms from $[x]$ to $[y]$ in the diagram $[\C]$ by 
$$
mor_{[\C]}([x], [y]) = \{\sigma \in S \mid s(\sigma) \subset [x], \ t(\sigma) \subset [y]\}.     
$$
\noindent
T(iii): For morphisms $[x] \stackrel{\sigma}{\longrightarrow} [y] \stackrel{\tau}{\longrightarrow} [z]$, 
there exist $f \in \sigma$ and 
$g \in \tau$ such that $s(g) = t(f)$. Moreover, there is a unique element $\mu = \mu(\tau, \sigma)$ in $S$ 
such that $p_{\tau \sigma}^{\mu} \geq 1$.  \\

It follows from \cite[Remark 3.1]{K-Momose} that 
the implication $\text{T(i)} \Rightarrow \text{T(ii)}$ holds. Observe that the condition T(ii) 
is nothing but that in the definition of a natural schemoid; see Definition \ref{defn:mild_object}.
A schemoid $(\C, S)$ is called {\it tame} if the conditions T(i) and T(iii) hold.  Thus a tame schemoid is natural.

\begin{lem} \label{lem:tame_qs}{\em (}\cite[Lemma 3.3]{K-Momose}{\em )} 
Let $(\C, S)$ be a tame schemoid. Then the diagram $[\C]$ is a category with the composite of morphisms 
defined by $\tau \circ \sigma = \mu(\tau, \sigma)$. 
\end{lem}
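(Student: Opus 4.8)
The plan is to check the category axioms for $[\C]$ directly. The one fact that makes everything go through, and which I would establish first, is the following reformulation of T(iii) in the presence of the schemoid bijection (2.1): \emph{for composable cells $[x]\xrightarrow{\sigma}[y]\xrightarrow{\tau}[z]$ in $[\C]$, the cell $\mu(\tau,\sigma)$ is precisely the set of composites $g\circ f$ with $(g,f)\in\tau\times_{ob\C}\sigma$ and $s(g)=t(f)$, and this set is nonempty.} Nonemptiness and the existence of a distinguished cell are exactly T(iii); its uniqueness clause shows that $g\circ f\in\mu(\tau,\sigma)$ for \emph{every} composable pair $(g,f)\in\tau\times_{ob\C}\sigma$ (any such composite lies in some cell $\mu'$ with $p_{\tau\sigma}^{\mu'}\ge1$, hence $\mu'=\mu(\tau,\sigma)$); and since $p_{\tau\sigma}^{\mu(\tau,\sigma)}\ge1$, the bijection (2.1) forces each element of $\mu(\tau,\sigma)$ to have a nonempty fibre under $\pi_{\tau\sigma}^{\mu(\tau,\sigma)}$, i.e.\ to be such a composite.

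With this in hand, well-definedness of $\tau\circ\sigma:=\mu(\tau,\sigma)$ as a morphism $[x]\to[z]$ is quick: pick a composable realizing pair $(g,f)$; then $s(g\circ f)=s(f)$ with $id_{s(f)}\in[x]$ and $t(g\circ f)=t(g)$ with $id_{t(g)}\in[z]$, and condition T(ii) (which a tame schemoid satisfies) propagates these $\sim_S$-equivalences across the whole cell $\mu(\tau,\sigma)$, so $s(\mu(\tau,\sigma))\subset[x]$ and $t(\mu(\tau,\sigma))\subset[z]$. For identities, T(i) ensures that the cell $J_{[x]}$ containing $id_x$ consists exactly of the identities $id_y$ with $id_y\sim_S id_x$, so $J_{[x]}\in mor_{[\C]}([x],[x])$ depends only on $[x]$; and for $\sigma\in mor_{[\C]}([x],[y])$ and $f\in\sigma$, the composable pairs $(id_{t(f)},f)$ and $(f,id_{s(f)})$ have composite $f\in\sigma$, whence $J_{[y]}\circ\sigma=\sigma=\sigma\circ J_{[x]}$ by the first step.

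The substantive point is associativity for $[w]\xrightarrow{\rho}[x]\xrightarrow{\sigma}[y]\xrightarrow{\tau}[z]$. I would show that both $(\tau\circ\sigma)\circ\rho$ and $\tau\circ(\sigma\circ\rho)$ equal the unique cell containing $g\circ f\circ e$ for a (equivalently, any) composable triple $(g,f,e)\in\tau\times\sigma\times\rho$, and that such a triple exists. Existence: $(\tau\circ\sigma)\circ\rho$ is defined, so it admits a composable realizing pair $(k,e)$ with $k\in\tau\circ\sigma=\mu(\tau,\sigma)$; by the first step $k=g\circ f$ for some composable $(g,f)\in\tau\times\sigma$, and $s(f)=s(k)=t(e)$ makes $(g,f,e)$ composable. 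For the identification, applying the first step to $\mu(\tau,\sigma)$ gives that $(\tau\circ\sigma)\circ\rho=\nu$ iff some composable triple has $(g\circ f)\circ e\in\nu$, while unfolding $\sigma\circ\rho=\mu(\sigma,\rho)$ the same way gives that $\tau\circ(\sigma\circ\rho)=\nu$ iff some composable triple has $g\circ(f\circ e)\in\nu$; the two conditions coincide by associativity of composition in $\C$.

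I expect this reduction to composable triples to be the only real obstacle, and it is exactly where tameness enters twice: the bijection (2.1) is what tells us the intermediate cells are \emph{nothing but} honest composites, and the uniqueness half of T(iii) is what tells us the composite of \emph{any} composable pair drawn from $\tau\times\sigma$ returns to $\mu(\tau,\sigma)$ and not to a rival cell. After that, both halves of associativity and the unit laws are a routine unwinding of the definitions.
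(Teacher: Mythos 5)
Your argument is correct: the key observation that, under the schemoid bijection \textnormal{(2.1)} together with the uniqueness clause of T(iii), the cell $\mu(\tau,\sigma)$ consists exactly of the composites of composable pairs from $\tau\times_{ob\C}\sigma$ is precisely what makes well-definedness, the unit laws (via T(i)) and associativity (via reduction to a composable triple) go through. The paper itself states this lemma without proof, citing \cite[Lemma 3.3]{K-Momose}; your proof is the expected one and I see no gap.
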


\begin{rem}\label{rem:mildness} The schemoids arising from a discrete group and an association scheme are natural; 
see the diagram (2.2). Moreover, the schemoids constructed by Boolean posets including 
an abstract simplicial complex are natural; see \cite{K-Momose, Numata}.  
A {\it coherent configuration} \cite{Higman} gives a natural schemoid via the same functor as $\jmath$ in the diagram (2.2). 
In fact, the implication mentioned above gives the result. 
\end{rem}

Tameness and naturality of a schemoid are described in terms of a {\it coloring map} used in \cite{Numata_2}; see Appendix B. 

Our main theorem (Theorem \ref{thm:main}) below asserts that the functor category $\natural\mathsf{Sets}^{(\C, S)}$ is a topos 
for every colored category $(\C, S)$. 
We recall the definition of a topos, which is described in the Giraud form; see, for example, \cite[\S 1]{M}, \cite[0.45 Theorem]{J} and \cite[page 577]{M-M}. 

\begin{defn}
A category $\E$ is said to be a  (Grothendieck) {\it topos} if it satisfies the Giraud axioms (G1), (G2), (G3) and (G4) below. 

\medskip
\noindent
(G1) The category $\E$ has finite limits. \\
(G2) All set-indexed sums exist in $\E$, and are compatible with every pullback construction. Moreover, sums are disjoint; that is, 
for a family $\{E_i\}_{i\in I}$ 
of objects in $\E$, the diagram 
$$
\xymatrix@C20pt@R20pt{ 
0 \ar[r] \ar[d]  &  E_i \ar[d] \\
E_j \ar[r] & \Sigma_{i\in I} E_i
}
$$
is a pullback for any $i$ and $j$, where $0$ denotes the initial object. 

A diagram (*): $\xymatrix@C20pt@R20pt{R \ar@<0.5ex>[r]^-{r} \ar@<-0.5ex>[r]_-{s} & E \ar[r]^f & F}$ in $\E$ is said to be {\it exact} if $f$ 
is the coequalizer of $r$ and $s$ and the diagram 
$$
\xymatrix@C20pt@R20pt{ 
R \ar[r]^s \ar[d]_r  &  E \ar[d]^f \\
E \ar[r]_f & F
}
$$
is a pullback. Moreover,  we say that the diagram (*) above is {\it stably exact} if the exactness is preserved under every pullback construction.  

A monomorphism $\xymatrix@C20pt@R20pt{R \ \ \ar@{>->}[r] &  E\times E}$ is said to be an {\it equivalence relation} 
if the induced inclusion 
$\text{Hom}_\E(T, R) \subset \text{Hom}_\E(T, E\times E) \cong \text{Hom}_\E(T, E) \times \text{Hom}_\E(T, E)$ 
is an equivalence relation on the set $\text{Hom}_\E(T, E)$ for every object $T$. 

\noindent
(G3)  
(i) For every epimorphism $E \to F$ in $\E$, the diagram 
$\xymatrix@C20pt@R20pt{E\times_F E \ar@<0.5ex>[r] \ar@<-0.5ex>[r] &E \ar[r] & F}$
is stably exact. \\
(ii) For every equivalence relation $\xymatrix@C20pt@R20pt{R \ \ \ar@{>->}[r] &  E\times E}$, there exists an object $E/R$ which fits into an exact diagram  
$\xymatrix@C20pt@R20pt{R \ar@<0.5ex>[r] \ar@<-0.5ex>[r] & E \ar[r] & E/R}$.

We call a set ${\mathcal I}$ of objects in $\E$ a {\it set of generators} if for distinct morphisms 
$f, g : X \to Y$, there exist an object $A$ in ${\mathcal I}$ and a morphism $h : A \to X$ such that $f\circ h \neq g\circ h$. \\
(G4) The category $\E$ has a set of generators.
\end{defn}

We also recall the definition of a morphism of topoi; see \cite[1.16 Definition]{J} and \cite[Chapter I, \S 1]{M}. 

\begin{defn}
A {\it geometric morphism} $f : \F \to \E$ of topoi consists of a pair of functors ({\it inverse} and {\it direct} image functors) 
$f^* :  \E \to \F$ and  $f_* :  \F \to \E$ with the following properties:
(i) $f^*$ is left adjoint to $f_*$; $f^* \dashv f_*$, (ii) $f^*$ commutes with finite limits. 
\end{defn}

We may write $(f^*, f_*)$ for such a geometric morphism $f$. 
Our main theorem is described as follows. 

\begin{thm}\label{thm:main}
Let $(\C, S)$ be a colored category. 
Then the functor category $\natural\mathsf{Sets}^{(\C, S)}$ is a topos. In consequence, 
the category of abelian group objects $Ab(\natural\mathsf{Sets}^{(\C, S)}) = \natural\mathsf{Mod}^{(\C, S)}$ in the topos 
$\natural\mathsf{Sets}^{(\C, S)}$ has enough injectives. Moreover, 
the inclusion functor $\iota : \natural\mathsf{Sets}^{(\C, S)} \to \mathsf{Sets}^{\C}$ gives rise to 
a geometric morphism of topoi
$$
f =(\iota, f_*) : \mathsf{Sets}^{\C} \to \natural\mathsf{Sets}^{(\C, S)}. 
$$
\end{thm}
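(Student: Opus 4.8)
The plan is to verify the Giraud axioms (G1)--(G4) for $\natural\mathsf{Sets}^{(\C,S)}$ by comparing with the ambient topos $\mathsf{Sets}^{\C}$, for which all four axioms are classical. The key structural observation is that $\natural\mathsf{Sets}^{(\C,S)}$ is a \emph{reflective} subcategory of $\mathsf{Sets}^{\C}$: the equivalence relation $\stackrel{0}{\sim}$ partitions $ob\,\C$ into blocks, and a functor $F : \C \to \mathsf{Sets}$ lies in $\natural\mathsf{Sets}^{(\C,S)}$ precisely when $F$ is constant on each block, with the transition maps respecting this. So first I would construct the left adjoint $L$ to the inclusion $\iota$ by a coequalizer/colimit construction: given $F \in \mathsf{Sets}^{\C}$, force $F(x) = F(y)$ whenever $x \stackrel{0}{\sim} y$ by taking the appropriate quotient, fibrewise. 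One must check that $L$ is well defined as a functor into $\natural\mathsf{Sets}^{(\C,S)}$ and that the expected unit/counit satisfy the triangle identities; this is the routine but slightly fiddly part, since one has to track how the colorings interact with the generated equivalence relation $\stackrel{0}{\sim}$ and confirm that $L$ lands in the colored-morphism-preserving functors.

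Next I would show $L$ preserves finite limits. The cleanest route: $\natural\mathsf{Sets}^{(\C,S)}$ is equivalent to the functor category $\mathsf{Sets}^{\overline{\C}}$ for a suitable quotient category $\overline{\C}$ obtained by collapsing the objects of $\C$ along $\stackrel{0}{\sim}$ (and identifying morphisms accordingly). Indeed, a color-preserving functor that is constant on $\stackrel{0}{\sim}$-blocks is the same data as a functor on the quotient. If this equivalence $\natural\mathsf{Sets}^{(\C,S)} \simeq \mathsf{Sets}^{\overline{\C}}$ can be established, then $\natural\mathsf{Sets}^{(\C,S)}$ is a presheaf topos outright, so (G1)--(G4) hold automatically (limits and colimits are computed pointwise in $\mathsf{Sets}^{\overline{\C}}$, sums are disjoint and stable, effective equivalence relations exist, and the representables form a set of generators), and simultaneously $L$ is identified with restriction along the quotient functor $q : \C \to \overline{\C}$, which is a left adjoint and preserves all limits since they are pointwise. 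The existence of enough injectives in $Ab(\natural\mathsf{Sets}^{(\C,S)}) = \natural\mathsf{Mod}^{(\C,S)}$ then follows from the standard fact that the category of abelian group objects in any Grothendieck topos (equivalently, of abelian-group-valued functors on a small category) has enough injectives.

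Finally, for the geometric morphism: with $L = q^*$ as above and $\iota$ the inclusion, we have $\iota \dashv f_*$ where $f_* = $ right Kan extension along $q$ (equivalently the "sheafification"-type functor alluded to in the introduction). Alternatively, and more in keeping with the statement as written, I would take $f^* = \iota$ itself, observe that $\iota$ has a \emph{right} adjoint $f_*$ (construct $f_*$ directly, or invoke the adjoint functor theorem since $\iota$ preserves all colimits — it is a right adjoint, hence cocontinuous — between locally presentable categories), and check that $\iota$ preserves finite limits. Since limits in $\natural\mathsf{Sets}^{(\C,S)}$ agree with those in $\mathsf{Sets}^{\C}$ when the latter happen to land in the subcategory, and $\iota$ creates exactly these, the preservation of finite limits is immediate once the reflectivity is in place. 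The main obstacle I expect is the bookkeeping in the first step: carefully defining the quotient category $\overline{\C}$ so that $\natural\mathsf{Sets}^{(\C,S)} \simeq \mathsf{Sets}^{\overline{\C}}$ genuinely holds — in particular verifying that collapsing objects along $\stackrel{0}{\sim}$ does not break composability of morphisms and that the color-constancy condition on natural transformations corresponds exactly to naturality over $\overline{\C}$. Once that equivalence is nailed down, everything else is formal topos theory.
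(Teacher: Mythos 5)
Your main line --- identify $\natural\mathsf{Sets}^{(\C, S)}$ with a presheaf category $\mathsf{Sets}^{\overline{\C}}$ on a quotient of $\C$, so that the Giraud axioms become automatic and the direct image functor is right Kan extension along the quotient functor --- is exactly the paper's strategy: your $\overline{\C}$ is the category $c[(\C, S)]$ of Theorem \ref{thm:module}, and the geometric morphism is obtained from $\pi : \C \to c[(\C, S)]$ precisely as in your ``alternative'' reading with $f^* = \iota$. Two cautions. First, drop the reflective-subcategory framing: $\iota$ is \emph{not} fully faithful, because $\natural\mathsf{Sets}^{(\C, S)}$ restricts the morphisms of $\mathsf{Sets}^{\C}$ and not only the objects (Example \ref{ex:pullbacks} exhibits a natural transformation between objects of the subcategory that is not a morphism of it), so the subcategory is not reflective in the usual sense; moreover, restriction along the quotient functor $q$ \emph{is} $\iota$ itself, not its left adjoint $L$, so your identification of $L$ with $q^*$ is backwards. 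None of that detour is needed: once the isomorphism with $\mathsf{Sets}^{\overline{\C}}$ is in hand, $\iota = q^*$ preserves all limits and colimits for free and admits the right Kan extension as right adjoint. Second, the ``bookkeeping'' you defer is the real content of the proof: collapsing objects along $\stackrel{0}{\sim}$ creates new composabilities, so $\overline{\C}$ cannot be formed by merely identifying morphisms of $\C$; its hom-sets must be taken to be formal finite words in the colors, modulo the relations generated by actual composites in $\C$ and by identities lying in a common color. Constructing this category and verifying that restriction along $\pi$ is an isomorphism onto $\natural\mathsf{Sets}^{(\C, S)}$ is Theorem \ref{thm:module}, which carries most of the weight of the paper's argument.
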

The right adjoint $f_*$ in Theorem \ref{thm:main} behaves like the ``sheafification" 
since $\iota$ is the inclusion. In fact, for any ``presheaf" $F \in ob(\mathsf{Sets}^{\C})$, 
we have the functor $f_*(F) : (\C, S) \to \T$ which preserves colors.  

Theorem \ref{thm:main} enables us to define cohomology of colored categories according to the usual procedure. 

\begin{defn}\label{defn:cohomology}
Let $(\C, S)$ be a colored category. Cohomology $H^*((\C, S), M)$ of $(\C, S)$ with coefficients in $M$, 
which is an object in $\mathsf{Mod}^{(\C, S)}$ and hence in $\natural\mathsf{Mod}^{(\C, S)}$, 
is defined to be the right derived functor of 
$\text{Hom}_{\natural\mathsf{Mod}^{(\C, S)}}(\underline{\mathbb Z}, \ \ )$, namely,  
$$
H^*((\C, S), M) := \text{Ext}_{\natural\mathsf{Mod}^{(\C, S)}}^*(\underline{\mathbb Z}, M), 
$$
where $\underline{\mathbb Z}$ stands for the constant sheaf with values in ${\mathbb Z}$.
\end{defn}

\begin{rem}\label{rem:discreteSmod}
Let $\C$ be a small category and $\calK(\C)=(\C, S)$ the discrete schemoid; that is, 
the partition $S$ is given by $S=\{ \{f\} \}_{f \in mor(\C)}$; 
see \cite[Example 2.1]{K-Momose}. We see that $\mathsf{Sets}^{\calK(\C)}$ is the usual functor category 
$\mathsf{Sets}^{\C}$ and then $\mathsf{Sets}^{\calK(\C)}$ is a topos, which is the so-called {\it classifying topos} of $\C$. 
In particular, the Yoneda lemma enables us to verify that the axiom (G4) is satisfied in 
$\mathsf{Sets}^{\C}$; see, for example, \cite[page 11]{M}. Here we regard $\C$ as $(\C^{op})^{op}$. 
For a colored category $(\C, S)$, the representation functor $\text{Hom}_{\C}(x, \ )$ is {\it not} in  $\mathsf{Sets}^{(\C, S)}$ in general. Therefore, in order to prove Theorem \ref{thm:main}, 
we do not apply the same proof as that of the result above. 
\end{rem}

In order to prove Theorem \ref{thm:main}, we shall show that the wide subcategory $\natural\mathsf{Sets}^{(\C, S)}$ 
is isomorphic to a classifying topos. Such a topos is described below. 

We recall the equivalence relation  $\stackrel{0}{\sim}$ and denote by $I_0$ the quotient $ob \ \C/ \stackrel{0}{\sim}$.
 For a cell $\sigma \in S$, we define $s(\sigma) =[s(f)]$ and $t(\sigma)=[t(f)]$ if $f \in \sigma$.  
Observe that $s(\sigma)$ and $t(\sigma)$ are elements in $I_0$ and that these elements are determined 
independent of the choice of the morphism $f$ in $\sigma$. 

Let $M$ be the set of all finite sequences $\sigma_n\cdots \sigma_1$ with $\sigma_j \in S$ and 
$t(\sigma_i)=s(\sigma_{i+1}) \ \text{for}  \ 1 \leq i \leq n-1$. 
For elements $[x]$ and $[y]$ in the set $ob \ \C / \stackrel{0}{\sim}$, we define a subset $M_{[x][y]}$ of $M$ by 
$$
M_{[x][y]} := \{ \sigma_n\cdots \sigma_1 \in M \mid s(\sigma_1) =[x], t(\sigma_n)=[y] \}. 
$$
We define a relation $\stackrel{ob}{\sim}$ on $M_{[x][y]}$ by $u\sigma v \stackrel{ob}{\sim} u\mu v$ for $u, v \in M$ 
and $\sigma, \mu \in S$ if there exist objects $a$ and $b$ such that 
$id_a \in \sigma$ , $id_b \in \mu$ and $a \stackrel{0}{\sim} b$. 
Moreover, let $\stackrel{c}{\sim}$ be a relation defined by $u\mu\tau v \stackrel{c}{\sim} u\sigma v$ for $u, v \in M$ and 
$\mu, \tau, \sigma \in S$ provided there exist morphisms $l \in \mu$ and $k\in \tau$ such that $lk \in \sigma$. 
Then, we have an equivalence relation $\stackrel{1}{\sim}$ in $M_{[x][y]}$ generated by relations $\stackrel{ob}{\sim}$ and $\stackrel{c}{\sim}$.  
The concatenation of sequences gives rise to a well-defined composite 
$
M_{[y][z]}/\stackrel{1}{\sim} \times \ M_{[x][y]}/\stackrel{1}{\sim} \ \longrightarrow  M_{[x][z]}/\stackrel{1}{\sim}.
$
Thus, we have a small category $c[(\C, S)]$ whose set of objects is the quotient set $I_0$ and whose homset $\text{Hom}_{c[(\C, S)]}([x], [y])$ 
is the quotient $M_{[x][y]}/\stackrel{1}{\sim}$. Observe that $\sigma = id_{[x]}$ in $c[(\C, S)]$ whenever $id_x \in \sigma$. 

\begin{lem}\label{lem:pi}
Let $\pi : \C \to c[(\C, S)]$ be defined on objects by $\pi(x)=[x]$ and on morphisms by $\pi(f)=\sigma$, where $f \in \sigma$. Then $\pi$ 
is a functor.  
\end{lem}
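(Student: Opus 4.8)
The plan is to verify directly that the assignments $\pi(x)=[x]$ on objects and $\pi(f)=\sigma$ on morphisms (where $f\in\sigma$) respect identities and composition. First I would check that $\pi$ is well defined: since $S$ is a partition of $mor(\C)$, each morphism $f$ lies in exactly one cell $\sigma$, so $\pi(f)$ is unambiguous; and for an identity $id_x$, the containing cell $\sigma$ satisfies $\sigma=id_{[x]}$ in $c[(\C,S)]$ by the remark immediately preceding the lemma, so $\pi(id_x)=id_{\pi(x)}$.

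Next I would confirm that $\pi$ lands in the correct hom-set. If $f\in\sigma$ with $s(f)=x$ and $t(f)=y$, then by the definition of $s(\sigma)$ and $t(\sigma)$ on cells we have $s(\sigma)=[x]$ and $t(\sigma)=[y]$, so the one-term sequence $\sigma$ lies in $M_{[x][y]}$ and its class is an element of $\text{Hom}_{c[(\C,S)]}([x],[y])=M_{[x][y]}/\!\stackrel{1}{\sim}$. Thus $\pi(f)\in\text{Hom}_{c[(\C,S)]}(\pi(x),\pi(y))$, as required.

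The remaining point is compatibility with composition. Given composable morphisms $g,f$ in $\C$ with $f\in\sigma$, $g\in\tau$ and $s(g)=t(f)$, the composite $gf$ lies in some cell $\mu\in S$, and since $l:=g\in\tau$, $k:=f\in\sigma$ satisfy $lk=gf\in\mu$, the relation $\stackrel{c}{\sim}$ gives $\tau\sigma\stackrel{c}{\sim}\mu$ in $M_{[x][z]}$ (where $x=s(f)$, $z=t(g)$), hence $\tau\sigma$ and $\mu$ represent the same class under $\stackrel{1}{\sim}$. Since the composite in $c[(\C,S)]$ is induced by concatenation, $\pi(g)\circ\pi(f)=[\tau\sigma]=[\mu]=\pi(gf)$. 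This establishes functoriality.

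I do not expect a serious obstacle here: the construction of $c[(\C,S)]$ was engineered precisely so that $\pi$ becomes a functor, so the proof is essentially unwinding the definitions. The only mild subtlety worth spelling out is that the one-term sequence representing $\pi(f)$ genuinely lies in the right $M_{[x][y]}$ — which is exactly the content of the observation that $s(\sigma),t(\sigma)\in I_0$ are independent of the chosen representative — together with the identity-cell normalization $\sigma=id_{[x]}$ in $c[(\C,S)]$ when $id_x\in\sigma$. Once these bookkeeping facts are in place, preservation of identities and of composition are immediate from the definitions of $\stackrel{c}{\sim}$ and of the concatenation composite.
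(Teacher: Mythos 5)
Your verification is correct and is exactly the routine unwinding of definitions that the paper has in mind — the paper itself only says ``the lemma is proved immediately,'' relying on the same facts you spell out (the partition makes $\pi$ well defined on morphisms, $s(\sigma)=[x]$ and $t(\sigma)=[y]$ put $\pi(f)$ in the right hom-set, the observation that $\sigma=id_{[x]}$ when $id_x\in\sigma$ handles identities, and the relation $\stackrel{c}{\sim}$ handles composition). Nothing further is needed.
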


The lemma is proved immediately. The following theorem is a key to proving our main theorem. 

\begin{thm} \label{thm:module} Let $(\C, S)$ be a colored category.  
Then the functor $\pi$ induces a functorial isomorphism $\pi^*: \mathsf{Sets}^{c[(\C, S)]} \to \natural\mathsf{Sets}^{(\C, S)}$ of categories.  
\end{thm}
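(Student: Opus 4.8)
The plan is to construct explicitly a functor $\pi^* : \mathsf{Sets}^{c[(\C,S)]} \to \natural\mathsf{Sets}^{(\C,S)}$ by precomposition with $\pi$, and to produce an inverse. Given a functor $G : c[(\C,S)] \to \mathsf{Sets}$, set $\pi^*(G) := G \circ \pi : \C \to \mathsf{Sets}$. First I would check that $G \circ \pi$ lands in $\natural\mathsf{Sets}^{(\C,S)}$: it is a functor on $\C$; it preserves colors because if $f, g \in \sigma$ then $\pi(f) = \sigma = \pi(g)$, so $(G\circ\pi)(f) = (G\circ\pi)(g)$, which is exactly the color-preserving condition for a morphism of colored categories $(\C,S) \to \mathsf{Sets}$ (recall $\mathsf{Sets}$ has all morphisms distinctly colored, so ``$u(\sigma)$ lies in one color class'' forces $u$ constant on $\sigma$). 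Moreover, if $x \stackrel{0}{\sim} y$ then $\pi(x) = [x] = [y] = \pi(y)$ in $c[(\C,S)]$, so $(G\circ\pi)_x = (G\circ\pi)_y$, which is the defining condition of the wide subcategory $\natural$. On morphisms of $\mathsf{Sets}^{c[(\C,S)]}$, i.e.\ natural transformations $\alpha : G \Rightarrow G'$, set $\pi^*(\alpha)_x := \alpha_{[x]}$; this is natural with respect to $\C$ and is automatically in $\natural$ since it factors through $I_0$. Functoriality of $\pi^*$ is immediate.

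Next I would construct the inverse $\Phi : \natural\mathsf{Sets}^{(\C,S)} \to \mathsf{Sets}^{c[(\C,S)]}$. Given $F : (\C,S) \to \mathsf{Sets}$ in the wide subcategory, define $\Phi(F)([x]) := F(x)$; this is well-defined on objects precisely because $x \stackrel{0}{\sim} y$ forces $F(x) = F(y)$ (note these are literally equal sets, not merely isomorphic, by the $\natural$ condition on identity natural transformations — applying the wide-subcategory condition to the identity of $F$). For a morphism $[\sigma_n \cdots \sigma_1] \in M_{[x][y]}/\!\stackrel{1}{\sim}$, define $\Phi(F)$ on it to be the composite $F(f_n)\circ\cdots\circ F(f_1)$ for any choice of representatives $f_j \in \sigma_j$; this is independent of representatives by color-preservation of $F$, and I must check it descends to $\stackrel{1}{\sim}$. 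For the relation $\stackrel{ob}{\sim}$: inserting a color $\sigma$ with $id_a \in \sigma$ contributes $F(id_a) = \mathrm{id}_{F(a)}$, and $a \stackrel{0}{\sim} b$ gives $F(a) = F(b)$, so the two composites agree. For $\stackrel{c}{\sim}$: if $l \in \mu$, $k \in \tau$, $lk \in \sigma$, then $F(l)\circ F(k) = F(lk)$ equals the value assigned to $\sigma$ by color-preservation. Functoriality of $\Phi(F)$ (compatibility with concatenation and identities) then follows, and on natural transformations $\Phi$ is defined the obvious way, $\Phi(\eta)_{[x]} := \eta_x$, which is legitimate because $\eta \in \natural$ means $\eta_x = \eta_y$ when $x \stackrel{0}{\sim} y$.

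Finally I would verify $\pi^* \circ \Phi = \mathrm{id}$ and $\Phi \circ \pi^* = \mathrm{id}$, on objects and morphisms of the respective categories. Both are essentially bookkeeping: $\Phi(\pi^*(G))([x]) = (G\circ\pi)(x) = G([x])$ and $\Phi(\pi^*(G))$ sends the class of $\sigma_n\cdots\sigma_1$ to $G(\sigma_n)\circ\cdots\circ G(\sigma_1) = G(\sigma_n\cdots\sigma_1)$ since $\pi$ is a functor (Lemma \ref{lem:pi}) and $\sigma_n\cdots\sigma_1$ is the composite in $c[(\C,S)]$; conversely $\pi^*(\Phi(F))(x) = \Phi(F)([x]) = F(x)$ and on a morphism $f \in \sigma$ we get $\Phi(F)(\sigma) = F(f)$. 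The compositions being the identity \emph{on the nose} (not just naturally isomorphic) is where the $\natural$ condition does the real work, since it guarantees the set-level equalities $F(x) = F(y)$ that make $\Phi(F)$ well-defined rather than merely defined up to isomorphism.

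The main obstacle I anticipate is not any single hard argument but the care needed in checking that $\Phi(F)$ is well-defined on the quotient $M_{[x][y]}/\!\stackrel{1}{\sim}$ — one must handle the two generating relations $\stackrel{ob}{\sim}$ and $\stackrel{c}{\sim}$ separately and, crucially, verify that the equalities involved are genuine set equalities rather than canonical isomorphisms; the $\natural$ hypothesis is exactly what upgrades the potential isomorphisms to equalities, so that $\pi^*$ becomes an isomorphism of categories and not merely an equivalence.
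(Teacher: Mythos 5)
Your proposal is correct and follows essentially the same route as the paper: the paper defines your $\Phi$ (called $\theta$ there), proves well-definedness on objects via the same observation that $x \stackrel{0}{\sim} y$ forces $F(x)=F(y)$ (its Lemma 3.1, derived from color-preservation of $F$ rather than from the identity transformation, but it is the same underlying fact), checks descent through the two generating relations $\stackrel{ob}{\sim}$ and $\stackrel{c}{\sim}$ exactly as you do, and exhibits precomposition with $\pi$ as the inverse.
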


Before proving Theorem \ref{thm:module}, we give comments on the category $c[(\C, S)]$. 

\begin{rem} \label{rem:monoids} 
Let $(\C, S)$ be a colored category. Suppose that $id_x\sim_S id_y$ for any objects $x$ and $y$ in $\C$. 
It is readily seen that $(\C, S)$ is a naturally colored category.  
Moreover,  we see that 
the category $c[(\C, S)]$ is the monoid generated by $S$ with relations such that $\sigma\tau =\mu$ if 
there exist composable morphisms $k \in \sigma$ and $h \in \tau$ with $kh \in \mu$.  
Observe that the relation $\sigma \stackrel{ob}{\sim} \tau$ implies the equality $\sigma = \tau$ in this case. 
Assume further that $(\C, S)$ is a schemoid. Then we have  
$$
c[(\C, S)] = \langle \sigma \in S \mid \sigma\tau =\mu  \ \text{if} \  p_{\sigma\tau }^\mu \neq 0 \rangle. 
$$
\end{rem}

Let $\mathsf{Gr}$, $\mathsf{AS}$, $\mathsf{Gpd}$, $\mathsf{Cat}$ and $q\mathsf{ASmd}$ be categories of groups, 
association schemes, groupoids, small categories and schemoids, respectively. The category $\mathsf{AS}$ has been introduced in \cite{H}. 
We here recall a commutative diagram of categories 
$$
\xymatrix@C35pt@R25pt{
\mathsf{Gpd} \ar[r]^-{{\widetilde S}( \ )} & 
q\mathsf{ASmd} \ar@<1ex>[r]^-{U}_-{\top} 
& \mathsf{Cat}, \ar@<1ex>[l]^-{\calK}  \\
\mathsf{Gr} \ar[u]^\imath \ar[r]^-{S( \ )}  & \mathsf{AS} \ar[u]_{\jmath} 
}
\eqnlabel{add-2}
$$ 
where $\imath : \mathsf{Gr} \to \mathsf{Gpd}$ is the natural fully faithful embedding  
and the functor $S( \ )$ assigns group-case association schemes to groups. 
Moreover, $\calK$ is a functor given by sending a small category to 
the discrete schemoid; see Remark \ref{rem:discreteSmod}. 
The functor $\jmath : \mathsf{AS} \to q\mathsf{ASmd}$ is indeed the composite of a fully faithful functor introduced 
in \cite[Example 2.6 (ii)]{K-Matsuo} and an inclusion functor. 
Observe that the functor $\calK$ is the left adjoint to the forgetful functor $U$; 
see \cite[Sections 2 and 3]{K-Matsuo} for more detail. 

The following result due to Hanaki \cite{Hanaki_privatecom} 
tells us what the category $c[\jmath (X, S)]$ for an association scheme $(X, S)$ is. The proof is postponed to Appendix A. 

\begin{prop}\label{prop:H}
Let $(X, S)$ be an association scheme and $(X, S)^{{\bf O}^\vartheta(S)}$ the factor scheme by the thin residue ${\bf O}^\vartheta(S)$; see \cite[2.3]{Z_book1}. 
Then there exists a functorial isomorphism 
$c[\jmath (X, S)]\cong (X, S)^{{\bf O}^\vartheta(S)}=: Quo(S)$ of groups. 
\end{prop}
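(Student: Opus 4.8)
The plan is to unwind both sides of the claimed isomorphism $c[\jmath(X,S)] \cong Quo(S)$ into very concrete combinatorial data and match them. First I would recall that for an association scheme $(X,S)$ the schemoid $\jmath(X,S)$ has a single object class: all identities $\id_x$ lie in the diagonal relation $1 \in S$, so $\id_x \sim_S \id_y$ for all $x,y$. Hence by Remark \ref{rem:monoids} the category $c[\jmath(X,S)]$ is \emph{one object}, i.e.\ a monoid, namely the monoid generated by the relations $s \in S$ subject to $s\cdot t = u$ whenever the structure constant $p_{st}^{u}$ is nonzero. So the first step is purely bookkeeping: identify $c[\jmath(X,S)]$ with the monoid $\langle S \mid st = u \text{ if } p_{st}^{u}\neq 0\rangle$, and observe that since the relation $\stackrel{ob}{\sim}$ collapses (only the diagonal $1$ contains identities), the sole effect of $\stackrel{1}{\sim}$ is the concatenation-composition relations coming from $\stackrel{c}{\sim}$.

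The second step is to show this monoid is in fact a \emph{group}, and then to identify it. The natural candidate map goes $S \to S/\!\!/{\bf O}^\vartheta(S)$, sending a relation $s$ to its coset under the thin residue, which is a partition of $X$ (or more precisely a normal closed subset giving rise to the factor scheme, whose elements behave like group elements because the thin residue is the smallest closed subset with thin — i.e.\ group-like — quotient). I would check that the defining relations of $c[\jmath(X,S)]$ are respected: if $p_{st}^{u}\neq 0$ then $u$ lies in the complex product $st$, hence the coset of $u$ is the product of the cosets of $s$ and $t$ in the factor scheme $Quo(S)$, which is thin and therefore a group. This gives a monoid homomorphism $c[\jmath(X,S)] \to Quo(S)$. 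For surjectivity, every coset is hit since every $s$ maps to its own coset and these generate; for injectivity, I would use that in $Quo(S)$ the product of cosets is computed exactly by the complex-product rule, and conversely if two words in the generators represent the same coset of the thin residue then they can be connected by a chain of the defining relations $st = u$ (using the definition of the thin residue ${\bf O}^\vartheta(S)$ as generated by all $s$ with $s s^* = \{1,\dots\}$, or equivalently as the set of relations equivalent to $1$ in the quotient) — this is the content of Hanaki's observation.

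The third step is functoriality: a morphism of association schemes $(X,S)\to(Y,T)$ induces a morphism of schemoids via $\jmath$, hence (by the construction of $c[-]$, which is plainly functorial on colored categories since it is defined by generators and relations from $S$) a morphism $c[\jmath(X,S)]\to c[\jmath(Y,T)]$; one checks this agrees under the isomorphism with the induced group homomorphism $Quo(S)\to Quo(T)$ on factor schemes by the thin residue, which amounts to noting that morphisms of schemes carry thin residue into thin residue and the two descriptions of the factor-group multiplication coincide. This compatibility check should be routine once the object-level isomorphism is pinned down.

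I expect the main obstacle to be the \emph{injectivity} part of the object-level isomorphism: showing that whenever two sequences $\sigma_n\cdots\sigma_1$ and $\mu_m\cdots\mu_1$ represent the same element of the factor group $Quo(S)$, they are already identified by the equivalence $\stackrel{1}{\sim}$ (i.e.\ by the concatenation/composition relations $\stackrel{c}{\sim}$). This requires a genuine structural fact about association schemes — essentially that the thin residue ${\bf O}^\vartheta(S)$ is exactly the set of relations that become the identity in the monoid $c[\jmath(X,S)]$ — so the core of the argument is identifying ``becomes trivial in the composition monoid'' with ``lies in the thin residue.'' One shows $\supseteq$ by noting each generating relation $s$ of ${\bf O}^\vartheta(S)$ (those with $1 \in ss^*$, coming from a thin-type identity $p_{ss^*}^{1}\neq 0$ forcing $s\,s^* = 1$ in the monoid, but $s\,s^*$ trivial together with associativity and the complex-product structure forces $s$ invertible with trivial image) maps to the identity coset; and $\subseteq$ by checking the monoid $c[\jmath(X,S)]$ has no relations beyond those forced, so its kernel-to-$Quo(S)$ is no larger than ${\bf O}^\vartheta(S)$. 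This is precisely where the quoted private communication of Hanaki does the work, and I would lean on the standard theory of closed subsets and quotient schemes (as in \cite{Z_book1}, 2.3) to carry it through.
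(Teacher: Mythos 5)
Your overall strategy is the same as the paper's (Appendix A): present $c[\jmath(X,S)]$ as the one-object monoid on generators $S$ with relations $\sigma\tau=\mu$ whenever $p_{\sigma\tau}^\mu\neq 0$ (equivalently the group $F/N$, with $F$ free on $S$ and $N$ normally generated by the words $\sigma\tau\mu^*$), map each generator $\sigma$ to its coset $\sigma^T$ in $S/\!/T$ where $T={\bf O}^\vartheta(S)$, check that this is a well-defined surjective homomorphism, and concentrate the real work in injectivity. You have correctly isolated injectivity as the crux.

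However, your sketch of that step contains a mischaracterization and leaves a genuine gap. The thin residue is \emph{not} generated by ``those $s$ with $1\in ss^*$'': one has $p_{ss^*}^{1}\neq 0$ (it equals the valency of $s$) for \emph{every} $s\in S$, so that description would yield all of $S$. Rather, $T=\langle ss^*\mid s\in S\rangle$ is the closed subset generated by the complex products $ss^*$, i.e., by the relations $u$ with $p_{ss^*}^{u}\neq 0$ for some $s$. The argument that actually closes injectivity needs two concrete steps you do not supply: (i) every word $\sigma_n\cdots\sigma_1$ reduces to a single generator $[\mu]$, because for any $\sigma,\tau$ there is some $\mu$ with $p_{\sigma\tau}^{\mu}\neq 0$; this reduces the kernel computation to single letters. (ii) If $[\mu]$ lies in the kernel, then $\mu^T=1^T$, hence $\mu\in T$ by \cite[Proposition 1.5.3]{Z_book1}, so $\mu\in s_1s_1^*\cdots s_rs_r^*$ for some $s_i$; iterating the defining relation then gives $[\mu]=[s_1][s_1^*]\cdots[s_r][s_r^*]$, and each $[s_i][s_i^*]=[1]=1$ precisely because $p_{s_is_i^*}^{1}\neq 0$. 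Your phrase ``forces $s$ invertible with trivial image'' conflates two different facts: $[s]$ is invertible with inverse $[s^*]$ (which is what makes the monoid a group), but $[s]$ does not have trivial image in general. With (i) and (ii) spelled out your argument coincides with the paper's; without them, your ``$\subseteq$'' direction (``the monoid has no relations beyond those forced, so its kernel is no larger than ${\bf O}^\vartheta(S)$'') is circular, since bounding the kernel is exactly what the appeal to \cite[Proposition 1.5.3]{Z_book1} accomplishes.
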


\section{Proof of the main theorem}

Our proof of  Theorem \ref{thm:module} is essentially the same as those 
of the proofs of \cite[Theorems 2.1 and 2.2]{Numata_2}. 
For the reader, we describe it in our context.

\begin{lem}\label{lem:objects} For an object $F$ in $\mathsf{Sets}^{(\C, S)}$, 
if $x \stackrel{0}{\sim} y$, then $F(x) = F(y)$.
\end{lem}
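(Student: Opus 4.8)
The plan is to unwind the definition of an object of $\mathsf{Sets}^{(\C, S)}$ and then propagate equality of sets along a chain realizing the relation $\stackrel{0}{\sim}$. Recall that an object $F$ of $\mathsf{Sets}^{(\C,S)}$ is, by definition, a \emph{morphism of colored categories} $(\C, S) \to \T$ with $\T = \mathsf{Sets}$, and that $\T$ is viewed as a colored category in which every morphism carries its own distinct color. Hence the defining condition ``for each $\sigma \in S$ there is a color $\tau$ of $\T$ with $F(\sigma) \subset \tau$'' forces $F(\sigma)$ to be a single morphism of $\mathsf{Sets}$: all morphisms lying in one cell are sent by $F$ to one and the same map of sets.

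The first step is therefore: if $f, g \in \sigma$ for some $\sigma \in S$, then $F(f) = F(g)$ as morphisms of $\mathsf{Sets}$; in particular their domains and codomains coincide, i.e.\ $F(s(f)) = F(s(g))$ and $F(t(f)) = F(t(g))$. Combined with the definition of the relation $\sim$ on $ob\ \C$ (namely $x \sim y$ iff $(x,y) = (s(f),s(g))$ or $(x,y) = (t(f),t(g))$ for some $\sigma \in S$ and $f, g \in \sigma$), this immediately yields the implication $x \sim y \Rightarrow F(x) = F(y)$.

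For the second step, recall that $\stackrel{0}{\sim}$ is the equivalence relation generated by $\sim$, so $x \stackrel{0}{\sim} y$ means there is a finite chain $x = x_0, x_1, \dots, x_n = y$ in which each consecutive pair is related by $\sim$ (or is equal). Since equality of sets is transitive, an induction on $n$ using the first step gives $F(x) = F(x_0) = F(x_1) = \cdots = F(x_n) = F(y)$, which is the claim. There is no genuine obstacle here; the only point requiring care is the observation, already built into the set-up, that ``$F$ preserves colors'' together with the convention on $\T$ forces $F(f) = F(g)$ for $f, g$ in a common cell — everything else is a routine transitivity argument.
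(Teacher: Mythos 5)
Your proposal is correct and follows essentially the same route as the paper: the key observation that color-preservation into $\mathsf{Sets}$ (whose morphisms all have distinct colors) forces $F(f)=F(g)$ for $f\sim_S g$, hence equality of (co)domains, is exactly the paper's argument for the generating relation $\sim$. Your explicit induction along a chain realizing $\stackrel{0}{\sim}$ merely spells out the transitivity step that the paper leaves implicit.
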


\begin{proof}
Suppose that $(x, y) =(s(f), s(g))$ or $(x, y) =(t(f), t(g))$ for some $f$ and $g$ with 
$f\sim_S g$. Then $F(f) = F(g)$. This yields that $F(x) = F(y)$. 
\end{proof}

\begin{proof}[Proof of Theorem \ref{thm:module}] 
We first define a functor $\theta : \natural\mathsf{Sets}^{(\C, S)}\to \mathsf{Sets}^{c[(\C, S)]}$ by 
$$
(\theta F)([x]) =  F(x) \ \text{and} \  (\theta F)(\sigma_n\cdots \sigma_1) = F(f_n)\cdots F(f_1), 
$$
where $[x]$ denotes an object of $c[(\C, S)]$, 
$f_i \in \sigma_i$ and $F$ is an object in 
$\natural\mathsf{Sets}^{(\C, S)}$.  Lemma \ref{lem:objects} implies that $\theta F$ is well defined in the objects of $c[(\C, S)]$. 

We verify the well-definedness of $\theta F$ in the morphisms.  As for the composite, since $t(\sigma_i)=s(\sigma_{i+1})$, it follows that 
there exist maps $f_i \in \sigma_i$ and $f_{i+1} \in \sigma_{i+1}$ such that $[t(f_i)]=[s(f_{i+1})]$. 
In view of Lemma \ref{lem:objects}, we have $F(t(f_i))=F(s(f_{i+1}))$. 
In order to prove that the definition of $\theta F$ does not depend on the choice of representatives, it suffices 
to show that  $F(\sigma)F(\tau) =F(\mu)$ if $\sigma\tau\stackrel{c}\sim \mu$ and $F(\sigma)=F(\mu)$ if $\sigma \stackrel{ob}{\sim} \mu$. 
In fact, the equivalence relation $\stackrel{1}{\sim}$ is generated by 
the relations $\stackrel{c}\sim$ and $\stackrel{ob}{\sim}$. 

Suppose that $\sigma\tau\stackrel{c}{\sim} \mu $.  Then by definition, there exist 
composable morphisms $k \in \sigma$ and $h \in \tau$ such that $kh$ is in $\mu$ .  
Thus it follows that 
$(\theta F)(\sigma\tau)=(\theta F)(\sigma)(\theta F)(\tau) = F(k)F(h) =F(kh) = (\theta F)(\mu)$. 
If $\sigma \stackrel{ob}{\sim} \mu$, then $(\theta F)(\sigma)=F(id_a)$ and $(\theta F)(\mu)=F(id_b)$ for some 
$id_a \in \sigma$ and $id_b \in \mu$ with $a \stackrel{0}{\sim} b$. 
By Lemma \ref{lem:objects}, we see that $F(a) = F(b)$. This implies that 
$F(id_a)= id_{F(a)} = id_{F(b)} = F(id_b)$. 

For a morphism $\eta : F \to G$ in $\natural\mathsf{Sets}^{(\C, S)}$, we define 
$\theta(\eta) : \theta F \to \theta G$ by  $\theta(\eta)_{[x]}= \eta_{x}$. Observe that by definition, 
$\eta_x =\eta_y$ if $x \stackrel{0}{\sim} y$. 
We prove that for a morphism $\sigma_n\cdots \sigma_1 : [x] \to [y]$, the diagram
$$
\xymatrix@C35pt@R25pt{
(\theta F)_{[x]} \ar[d]_{(\theta F)(\sigma_n\cdots \sigma_1)}\ar[r]^-{\theta \eta_{[x]}} & (\theta G)_{[x]} 
\ar[d]^{(\theta G)(\sigma_n\cdots \sigma_1)} \\
(\theta F)_{[y]}  \ar[r]^-{\theta \eta_{[y]}} & (\theta G)_{[y]} 
}
$$
is commutative. To this end, it suffices to verify the fact for the case where $n=1$.  
Then the diagram is nothing but the commutative diagram which shows $\eta$ is a natural transformation from $F$ to $G$.   
It follows that $\theta$ is a well-defined functor. 

We recall the functor in Lemma \ref{lem:pi}. Since the functor preserves the partition, it follows that $\pi$ induces a functor 
$\pi^* : \mathsf{Sets}^{c[(\C, S)]} \to \mathsf{Sets}^{(\C, S)}$ and 
$\pi^*$ factors through the category 
$\natural\mathsf{Sets}^{(\C, S)}$.  
It is readily seen that $\pi^*$ is the inverse to $\theta$. 
We have the result. 
\end{proof}

\begin{proof}[Proof of Theorem \ref{thm:main}]
In the category $\mathsf{Sets}$, axioms  (G1)--(G3) are satisfied. Then so are in $\natural\mathsf{Sets}^{(\C, S)}$ because 
the colimits and pullbacks are constructed objectwise. In fact, let  $\eta : F \to G$ be a morphism in $\natural\mathsf{Sets}^{(\C, S)}$. 
Then, for a morphisms $f : x \to y$ and $g : x' \to y'$ in $(\C, S)$, we have commutative diagrams
$$
\xymatrix@C25pt@R20pt{
F(x) \ar[r]^{F(f)} \ar[d]_{\eta_x} & F(y) \ar[d]^{\eta_y} & \text{and} & F(x') \ar[r]^{F(g)} \ar[d]_{\eta_{x'}} & F(y') \ar[d]^{\eta_{y'}}\\
G(x) \ar[r]_{G(f)} & G(y) & &                                                               G(x') \ar[r]_{G(g)} & G(y'). 
}
$$
Suppose that  $f \sim_S g$.  Since $\eta$ is in the functor category $\natural\mathsf{Sets}^{(\C, S)}$, it follows that the diagrams coincide. Thus we see that products, coprocucts, 
pullbacks and pushouts are in the functor category. Hence arbitrary limits and colomits are in  $\natural\mathsf{Sets}^{(\C, S)}$. 

Theorem \ref{thm:module} implies that a set of generators in $\natural\mathsf{Sets}^{(\C, S)}$ is indeed induced by that of 
$\mathsf{Sets}^{c[(\C, S)]}$ via the isomorphism $\pi^*$; see Remark \ref{rem:generators} below.  
The assertion on the abelian group objects follows from \cite[8.13 Theorem]{J}. 

Since $\mathsf{Sets}^{\C}$ and $\mathsf{Sets}^{c[(\C, S)]}$ are classifying topoi, it follows from 
the argument in \cite[Chapter I, Section 2]{M} that the functor $\pi : \C \to c[(\C, S)]$ in Lemma \ref{lem:pi} induces a geometric morphism 
$(\pi^*, \pi_*) : \mathsf{Sets}^{\C} \to \mathsf{Sets}^{c[(\C, S)]}$; see \cite[page 12]{M} for an explicit form of the right adjoint $\pi_*$. 
Thus the proof of Theorem \ref{thm:module} allows us to obtain the diagram  
$$
\xymatrix@C35pt@R10pt{
\mathsf{Sets}^{\C \  \ }  \ar@/^5mm/[rr]^-{\pi_*}_-{\top}  & & \mathsf{Sets}^{c[(\C, S)]}  \ar[ll]^-{\pi^*}  \ar@/^4mm/[ld]^-{\pi^*}_-{\cong} \\
 & \natural\mathsf{Sets}^{(\C, S)} \ar[lu]^-\iota \ar[ru]^-{\theta} & 
}
$$
in which the inner triangle is commutative. Therefore, for objects $F$ in $\natural\mathsf{Sets}^{(\C, S)}$ 
and $G$ in $\mathsf{Sets}^{\C}$, we have natural bijections
\begin{align*}
\text{Hom}_{\mathsf{Sets}^{\C}}(\iota F, G) &= \text{Hom}_{\mathsf{Sets}^{\C}}(\pi^*\theta F, G) \\
&\cong  \text{Hom}_{\mathsf{Sets}^{c[(\C, S)]}}(\theta F, \pi_*G) \ \cong \ \text{Hom}_{\natural\mathsf{Sets}^{(\C, S)}}(F, \pi^*\pi_*G). 
\end{align*}
The last bijection is induced by the isomorphism in Theorem \ref{thm:module}. Since $\iota$ commutes finite limits immediately, it follows that 
$(\iota, \pi^*\pi_*)$ is a geometric morphism we require.  
This completes the proof. 
\end{proof}

\begin{rem}\label{rem:generators} Let $\C$ be a small category. The usual functor category $\mathsf{Sets}^{\C}$ has a set of generators 
$\{h_x\}_{x \in \C}$, where $h_x$ is the representation functor, namely, $h_x := \text{Hom}_{\C}( x , \  )$. In fact, given two distinct morphisms 
$f_1, f_2 : F \to G$ in $\mathsf{Sets}^{\C}$, there exists an object $x$ in $\C$ such that  $(f_1)_x \neq (f_2)_x$ as a map from  
$F(x)$ to $G(x)$. Thus  ${(f_1)}_x(u) \neq {(f_2)}_x(u)$ for some $u \in F(x)$. It follows from Yoneda Lemma that there exists a morphism 
$\alpha_u : h_x \to F$ such that ${(\alpha_u)}_x(id_x) = u$ and hence ${(f_i\circ \alpha_u)}_x(id_x) = {(f_i)}_x(u)$  for $i = 1, 2$. 
This implies that $f_1\circ \alpha_u \neq f_2\circ \alpha_u$. 

By virtue of Theorem \ref{thm:module}, we see that 
the set $\{\pi^*h_x\}_{x \in c[(\C, S)]}$ gives a set  of generators in $\natural\mathsf{Sets}^{(\C, S)}$ for each colored category $(\C, S)$. 
\end{rem}

\begin{rem}\label{rem:equivalences} Let $(\C, S)$ be a naturally colored category.  Suppose further that the set of objects in $c[(\C, S)]$ is finite. 
By Theorem \ref{thm:module} and Proposition \ref{prop:equality}, 
we have equivalences 
$$\mathsf{Mod}^{(\C, S)} = Ab(\mathsf{Sets}^{(\C, S)}) \simeq Ab(\mathsf{Sets}^{c[(\C, S)]}) = \mathsf{Mod}^{c[(\C, S)]} \simeq 
{\mathbb Z}[c[(\C, S)]]\text{-}\mathsf{Mod}, $$
where ${\mathbb Z}[c[(\C, S)]]\text{-}\mathsf{Mod}$ denotes the category of left ${\mathbb Z}[c[(\C, S)]]\text{-}$modules. 
Mitchell's embedding theorem gives the second equivalence. 
It turns out that $\mathsf{Mod}^{(\C, S)}$ has enough injectives and projectives. 
We observe that the second equivalence is also functorial if the schemoids satisfy the condition that $id_x \sim_S id_y$ for any objects $x$ and $y$ in $\C$. 
Indeed, for any morphism 
$u : (\C, S) \to (D, S')$ between such naturally colored categories, the functor 
$c[u] : c[(\C, S)] \to c[(D, S')]$ induced by $u$ gives rise to a functor 
$c[u]^* :  {\mathbb Z}[c[(\D, S')]]\text{-}\mathsf{Mod} \to {\mathbb Z}[c[(\C, S)]]\text{-}\mathsf{Mod}$ since 
$\sharp ob (c[(\D, S')]) = 1= \sharp ob (c[(\C, S)])$ and hence $c[u]$ is a monomorphism in the set of objects. 
\end{rem}

Let $\widetilde{S}(G)$ be a natural schemoid which a discrete group $G$ gives; 
see \cite[Example 2.6(iii)]{K-Matsuo} and the diagram (2.2). 
In view of Proposition \ref{prop:H} and Remark \ref{rem:equivalences}, we have an equivalence 
$\varphi : \mathsf{Mod}^{\widetilde{S}(G)} \stackrel{\simeq}{\longrightarrow}  {\mathbb Z}[G]\text{-Mod}$ 
of abelian categories which sends a constant sheaf $\underline{M}$ to the module $M$ with the trivial $G$-action. This implies the following result.

\begin{cor} One has an isomorphism 
$
H^*(\widetilde{S}(G),  A) \cong H^*(G, \varphi A) 
$
of abelian groups for any object $A$ in $\mathsf{Mod}^{\widetilde{S}(G)}$. 
\end{cor}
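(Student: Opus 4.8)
The plan is to deduce the corollary directly from Theorem~\ref{thm:module} and the equivalence $\varphi$ recorded just above it, using only the standard fact that an equivalence of abelian categories preserves $\text{Ext}$-groups. Since $\widetilde{S}(G)$ is a natural schemoid, Proposition~\ref{prop:equality} gives $\natural\mathsf{Mod}^{\widetilde{S}(G)} = \mathsf{Mod}^{\widetilde{S}(G)}$, so by Definition~\ref{defn:cohomology}
$$
H^*(\widetilde{S}(G), A) = \text{Ext}^*_{\mathsf{Mod}^{\widetilde{S}(G)}}(\underline{\mathbb Z}, A),
$$
where $\underline{\mathbb Z}$ is an object of $\mathsf{Mod}^{\widetilde{S}(G)}$ because it is (locally) constant. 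Both sides of the asserted isomorphism are well defined: $\mathsf{Mod}^{\widetilde{S}(G)}$ has enough injectives by Theorem~\ref{thm:main} together with Proposition~\ref{prop:equality}, and ${\mathbb Z}[G]\text{-Mod}$ does as a module category.

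Next I would recall how $\varphi$ is built. Theorem~\ref{thm:module} supplies an isomorphism of categories $\pi^* : \mathsf{Sets}^{c[\widetilde{S}(G)]} \xrightarrow{\cong} \natural\mathsf{Sets}^{\widetilde{S}(G)}$, which restricts to an isomorphism on abelian group objects $\mathsf{Mod}^{c[\widetilde{S}(G)]} \xrightarrow{\cong} \mathsf{Mod}^{\widetilde{S}(G)}$; by Proposition~\ref{prop:H} in the group case $c[\widetilde{S}(G)] \cong G$, so $\mathsf{Mod}^{c[\widetilde{S}(G)]}$ is the category of functors on the one-object category $G$, that is, ${\mathbb Z}[G]\text{-Mod}$. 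Composing the inverse of $\pi^*$ with this identification yields $\varphi$, and unwinding the definition of $\pi^*$ shows that $\varphi$ carries a constant sheaf $\underline{M}$ to $M$ equipped with the trivial $G$-action; in particular $\varphi\underline{\mathbb Z} = {\mathbb Z}$ with trivial action.

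The main step is then routine. An equivalence of abelian categories is exact and carries injective objects to injective objects, so $\varphi$ sends an injective resolution $A \to I^{\bullet}$ in $\mathsf{Mod}^{\widetilde{S}(G)}$ to an injective resolution $\varphi A \to \varphi I^{\bullet}$ in ${\mathbb Z}[G]\text{-Mod}$; since $\varphi$ is moreover fully faithful, applying $\text{Hom}(\underline{\mathbb Z}, -)$ and passing to cohomology gives natural isomorphisms
\begin{align*}
\text{Ext}^*_{\mathsf{Mod}^{\widetilde{S}(G)}}(\underline{\mathbb Z}, A)
&\cong \text{Ext}^*_{{\mathbb Z}[G]\text{-Mod}}(\varphi\underline{\mathbb Z}, \varphi A) \\
&= \text{Ext}^*_{{\mathbb Z}[G]\text{-Mod}}({\mathbb Z}, \varphi A).
\end{align*}
The last group is $H^*(G, \varphi A)$ by the usual description of group cohomology as $\text{Ext}$ over ${\mathbb Z}[G]$ against the trivial module ${\mathbb Z}$, and combining with the first display gives the claim.

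I do not expect a genuine obstacle here: the argument is essentially bookkeeping, and the only point that requires any care is the identification $\varphi\underline{\mathbb Z} = {\mathbb Z}$ with trivial action, which follows by tracing through the definitions of $c[\widetilde{S}(G)]$ and of $\pi^*$. The statement that an equivalence of abelian categories preserves $\text{Ext}$-groups is standard. Thus the corollary is a direct consequence of Theorem~\ref{thm:module}, Proposition~\ref{prop:H}, and Remark~\ref{rem:equivalences}.
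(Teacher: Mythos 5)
Your proposal is correct and follows essentially the same route as the paper: the corollary is stated there as an immediate consequence of the equivalence $\varphi$ constructed from Theorem \ref{thm:module}, Proposition \ref{prop:H} and Remark \ref{rem:equivalences}, together with the fact that an equivalence of abelian categories preserves $\text{Ext}$-groups and that $\varphi$ carries $\underline{\mathbb Z}$ to the trivial module ${\mathbb Z}$. You have simply written out the bookkeeping that the paper leaves implicit.
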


Let $\jmath H(n, 2)$ be the schemoid arising from the Hamming scheme $H(n, 2)$ of binary codes with length $n$; 
see \cite[Example 2.2]{K-Momose}.  
The result  \cite[Proposition 4.3]{K-Momose} 
yields that there exists an equivalence 
$\psi : \mathsf{Mod}^{\jmath H(n, 2)} \stackrel{\simeq}{\longrightarrow} {\mathbb Z}[{\mathbb Z}/2]\text{-Mod}$ 
of abelian categories which assigns to a constant sheaf $\underline{M}$ the module $M$ with the trivial ${\mathbb Z}/2$-action.  Thus we have 

\begin{cor} \label{cor:Hamming} For any $n \geq 1$, one has an isomorphism 
$
H^*(\jmath H(n, 2),  A) \cong H^*({\mathbb Z}/2, \psi A) 
$
of abelian groups for any object $A$ in $\mathsf{Mod}^{\jmath H(n, 2)}$. 
\end{cor}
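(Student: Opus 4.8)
The plan is to obtain the isomorphism as a purely formal consequence of the equivalence $\psi$, along exactly the same lines as the proof of the preceding corollary. First I would unwind the definitions: by Definition \ref{defn:cohomology},
\[
H^*(\jmath H(n, 2), A) = \text{Ext}_{\natural\mathsf{Mod}^{\jmath H(n, 2)}}^*(\underline{\mathbb Z}, A),
\]
and, since the Hamming scheme $H(n,2)$ is an association scheme, the schemoid $\jmath H(n, 2)$ is natural by Remark \ref{rem:mildness}; hence Proposition \ref{prop:equality} gives $\natural\mathsf{Mod}^{\jmath H(n, 2)} = \mathsf{Mod}^{\jmath H(n, 2)}$, and these $\text{Ext}$ groups may be computed in $\mathsf{Mod}^{\jmath H(n, 2)}$, which has enough injectives by Theorem \ref{thm:main}.

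Next I would invoke the elementary fact that an equivalence of abelian categories is exact, preserves Hom-sets, and carries injective objects to injective objects, so that it induces isomorphisms on $\text{Ext}$ groups. Applying this to $\psi : \mathsf{Mod}^{\jmath H(n, 2)} \stackrel{\simeq}{\longrightarrow} {\mathbb Z}[{\mathbb Z}/2]\text{-Mod}$ yields
\[
\text{Ext}_{\mathsf{Mod}^{\jmath H(n, 2)}}^*(\underline{\mathbb Z}, A) \cong \text{Ext}_{{\mathbb Z}[{\mathbb Z}/2]\text{-Mod}}^*(\psi\underline{\mathbb Z}, \psi A).
\]
By the defining property of $\psi$ recalled above from \cite[Proposition 4.3]{K-Momose}, the constant sheaf $\underline{\mathbb Z}$ is sent to ${\mathbb Z}$ equipped with the trivial ${\mathbb Z}/2$-action; since $H^*({\mathbb Z}/2, -) = \text{Ext}_{{\mathbb Z}[{\mathbb Z}/2]}^*({\mathbb Z}, -)$ by the definition of group cohomology, the right-hand side is precisely $H^*({\mathbb Z}/2, \psi A)$, and this completes the argument.

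I do not anticipate a substantive obstacle here: the entire content sits in \cite[Proposition 4.3]{K-Momose} together with Theorem \ref{thm:main}, and the only points requiring a sentence of justification are the invariance of $\text{Ext}$ under an equivalence of abelian categories and the bookkeeping identifying $\psi\underline{\mathbb Z}$ with the trivial ${\mathbb Z}/2$-module ${\mathbb Z}$. Equivalently, one may present the proof as the special case $G = {\mathbb Z}/2$ of the mechanism underlying the previous corollary, with \cite[Proposition 4.3]{K-Momose} playing the role that Proposition \ref{prop:H} plays there.
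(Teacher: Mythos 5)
Your proposal is correct and matches the paper's own (essentially one-line) argument: the corollary is derived directly from the equivalence $\psi : \mathsf{Mod}^{\jmath H(n, 2)} \stackrel{\simeq}{\longrightarrow} {\mathbb Z}[{\mathbb Z}/2]\text{-Mod}$ of \cite[Proposition 4.3]{K-Momose}, which carries the constant sheaf $\underline{\mathbb Z}$ to the trivial module ${\mathbb Z}$ and hence identifies the two $\text{Ext}$ groups. Your additional bookkeeping (naturality of $\jmath H(n,2)$ via Remark \ref{rem:mildness} and Proposition \ref{prop:equality}, and the invariance of $\text{Ext}$ under an equivalence of abelian categories) is exactly the implicit content the paper leaves to the reader.
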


Let $[C]$ be the category in Lemma \ref{lem:tame_qs} associated with a tame schemoid $(\C, S)$.  
Then the category algebra of $[C]$ is isomorphic to the Bose--Mesner algebra (\cite[page 111]{K-Matsuo}) of 
$(\C, S)$ if each structure constant is less than or equal to $1$; see \cite[Theorem 3.5]{K-Momose}. 
We see that the category $c[(\C, S)]$ in this article is a generalization of $[C]$. 
In fact, we have the following 
proposition. 

\begin{prop}\label{prop:tame_one}
The category $[C]$ is isomorphic to $c[(\C, S)]$ as a category if $(\C, S)$ is a tame schemoid. 
\end{prop}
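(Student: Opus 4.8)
The plan is to construct an explicit functor $\Phi : [\C] \to c[(\C,S)]$ and check that it is an isomorphism of categories by exhibiting an inverse. First I would recall that, for a tame schemoid, the condition T(i) gives T(ii), so the relation $\stackrel{0}{\sim}$ on $ob\,\C$ agrees with the relation $id_x \sim_S id_y$; consequently $ob[\C] = \{id_x\}/\!\sim_S$ coincides with $I_0 = ob\,\C/\!\stackrel{0}{\sim} = ob\,c[(\C,S)]$, and I set $\Phi$ to be the identity on objects. On morphisms, a morphism $[x] \to [y]$ in $[\C]$ is a single cell $\sigma \in S$ with $s(\sigma) \subset [x]$, $t(\sigma) \subset [y]$, so I would define $\Phi(\sigma)$ to be the class in $M_{[x][y]}/\!\stackrel{1}{\sim}$ of the length-one sequence $\sigma$. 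This is clearly well defined on objects and morphisms; the content is functoriality and bijectivity on hom-sets.

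Next I would verify that $\Phi$ is a functor. Identities are preserved because $id_{[x]}$ in $[\C]$ is the cell $\sigma$ containing $id_x$, and the same cell represents $id_{[x]}$ in $c[(\C,S)]$ by the observation recorded after the construction of $c[(\C,S)]$. For composition, given $[x] \stackrel{\sigma}{\to} [y] \stackrel{\tau}{\to} [z]$ in $[\C]$, Lemma \ref{lem:tame_qs} says $\tau \circ \sigma = \mu(\tau,\sigma)$, the unique $\mu$ with $p^{\mu}_{\tau\sigma} \geq 1$; by T(iii) there are composable $f \in \sigma$, $g \in \tau$ with $gf \in \mu$, which is exactly the relation $\mu \tau \stackrel{c}{\sim} \mu(\tau,\sigma)$... more precisely $\tau\sigma \stackrel{c}{\sim} \mu(\tau,\sigma)$ in $M$. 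Hence the concatenation $\tau\sigma$ and the length-one sequence $\mu(\tau,\sigma)$ become equal in $M_{[x][z]}/\!\stackrel{1}{\sim}$, so $\Phi(\tau)\Phi(\sigma) = \Phi(\tau\circ\sigma)$.

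To finish I would show $\Phi$ is bijective on each hom-set $[\C]([x],[y]) = mor_{[\C]}([x],[y]) \to M_{[x][y]}/\!\stackrel{1}{\sim}$. Surjectivity: every generator $\sigma_n \cdots \sigma_1$ of $M_{[x][y]}$ is, by iterated use of T(iii) together with Lemma \ref{lem:tame_qs} (composites in $[\C]$ exist), $\stackrel{1}{\sim}$-equivalent to a single cell, namely $\sigma_n \circ \cdots \circ \sigma_1$ computed in $[\C]$; so every class has a length-one representative, which lies in the image of $\Phi$. Injectivity: if two cells $\sigma, \mu$ with the same source and target become $\stackrel{1}{\sim}$-equivalent, I would induct on the length of a chain of elementary relations $\stackrel{ob}{\sim}$ and $\stackrel{c}{\sim}$ connecting them; since both endpoints have length one, each intermediate word can be contracted back to a single cell by the composition in $[\C]$, and one checks that applying an $\stackrel{ob}{\sim}$-move (which, for tame schemoids, forces equality of the cells involved as in Remark \ref{rem:monoids}) or a $\stackrel{c}{\sim}$-move does not change the resulting cell—precisely because $\mu(\tau,\sigma)$ is \emph{unique}. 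Thus $\sigma = \mu$ in $[\C]$.

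The main obstacle I expect is the injectivity argument: one must show that the equivalence relation $\stackrel{1}{\sim}$ does not identify two distinct single cells, and this requires carefully exploiting the uniqueness clause in T(iii) (and the uniqueness in Lemma \ref{lem:tame_qs}) to see that "reducing a word to a cell" is independent of the order in which the relations $\stackrel{c}{\sim}$ and $\stackrel{ob}{\sim}$ are applied. Once one knows that every word in $M_{[x][y]}$ has a \emph{unique} length-one representative under $\stackrel{1}{\sim}$, both surjectivity and injectivity of $\Phi$ follow, and since $\Phi$ is the identity on objects and a functor, it is an isomorphism of categories.
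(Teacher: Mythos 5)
Your proposal is correct and follows essentially the same route as the paper: the functor is the identity on objects and sends a cell $\sigma$ to (the class of) the length-one word $\sigma$, and the two key inputs are exactly the ones you isolate, namely that tameness forces an $\stackrel{ob}{\sim}$-move to collapse to an equality of cells and that the uniqueness clause in T(iii) makes the $\stackrel{c}{\sim}$-moves compatible with the composition of Lemma \ref{lem:tame_qs}. The ``order-independence'' worry in your injectivity step is handled in the paper more cleanly by defining the evaluation map $G$ on all of $M_{[x][y]}$ (unambiguous because $[\C]$ is a category, so iterated composites are well defined) and checking that $G$ is invariant under each of the two generating relations $\stackrel{c}{\sim}$ and $\stackrel{ob}{\sim}$, so that it descends to an explicit inverse functor $\widetilde{G}$ with no confluence argument needed.
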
  

\begin{proof}
We first observe that $x \stackrel{0}{\sim} y$ if and only if $id_x \sim_S id_y$; 
see \cite[Remark 3.1]{K-Momose}. Thus we have $ob([\C])= ob(c[(\C, S)])$. 
Define a functor $F : [\C] \to c[(\C, S)]$ by $F([x])=[x]$ and $F(\sigma) = \sigma$ for 
$\sigma \in S$.  The well-definedness of $F$ follows immediately. 

We define a map  $G : M_{[x][y]} \to \text{Hom}_{[\C]}([x], [y])$ by 
$G(\sigma_n\cdots \sigma_1) = \sigma_n\cdots \sigma_1$ with the composite of morphisms in $[\C]$. 
Suppose that $\mu\tau   \stackrel{c}{\sim} \sigma $ for 
$\mu, \tau, \sigma \in S$, where 
$\stackrel{c}\sim$ is 
the relation mentioned when defining $c[(\C, S)]$. Then we have 
$G(\mu\tau )= \mu\tau  = \sigma  = G(\sigma )$. The second equality follows from 
the definition of the composite in $[\C]$. If $\sigma \stackrel{ob}{\sim}\mu$, then there exist  
$id_a \in \sigma$ and $id_b \in \mu$ such that $a \stackrel{0}{\sim} b$. 
Since $(\C, S)$ is tame, it follows that $id_a \sim_S id_b$ and hence $G(\sigma) = \sigma = \mu = G(\mu)$ in $[C]$. 
By definition, we see that $id_{[x]} = \sigma$ in $c[(\C, S)]$ if $id_x \in \sigma$. Therefore, we have 
$G(id_{[x]}) = G(\sigma) =\sigma = id_{[x]}$ in $[C]$. 
Thus $G$ induces a map 
$\widetilde{G} : \text{Hom}_{c[(\C, S)]}([x], [y]) \to \text{Hom}_{[\C]}([x], [y])$ and gives rise to a functor 
$\widetilde{G} : c[(\C, S)] \to [\C]$. It is readily seen that $\widetilde{G}$ is the inverse of the functor $F$. 
\end{proof}

We conclude this section with an example which shows that the category $\mathsf{Sets}^{(\C, S)}$ does {\it not} admit the topos structure, in general, 
with the same objectwise construction as in $\mathsf{Sets}^{\C}$. 

\begin{ex}\label{ex:pullbacks}
Let $\C$ be the small category defined by $ob(\C) = \{x, y\}$ and $mor(\C) =\{ id_x, id_y, y \stackrel{f}{\to} y \}$ with $ff=id_y$. 
We define a partition $S$ of  $mor(\C)$ by $S =\{\sigma, \tau\}$, where $\sigma=\{id_x, f\}$ and $\tau=\{id_y\}$. Then 
$(\C, S)$ is a colored category but neither a naturally colored one nor a schemoid.
This example tells us that the pullback in $\mathsf{Sets}^{\C}$ is not necessarily that in 
$\mathsf{Sets}^{(\C, S)}$. In fact, if $F \in \mathsf{Sets}^{(\C, S)}$, then $F(id_x) = F(f)$ and hence 
$
F(x) = F(y), F(id_x) = F(id_y) = F(f) 
$
as in Lemma \ref{lem:objects}. Since $id_x \not\sim_S id_y$, it follows that every natural transformation is locally constant.  

Let $U$ be the set $\{1, 2, 3\}$ and $F \in \mathsf{Sets}^{(\C, S)}$ the functor defined by 
$$
F(x) = F(y) = U, F(id_x) = F(id_y) = F(f) = id_U.
$$
Moreover, we define two natural transformations $\eta, \lambda : F \to F$ by 
$$
\eta_x(1) = \eta_x(2) = 1, \eta_x(3) = 3, \eta_y = id_U, 
\lambda_x(1) = \lambda_x(2) = 2,  \lambda_x(3) = 3, \lambda_y = id_U.
$$
Consider the pullback $F\times_FF$ of the diagram $\xymatrix@C20pt@R25pt{F \ar[r]^\lambda &F & F \ar[l]_{\eta}}$ 
in $\mathsf{Sets}^{\C}$. Then we have 
\begin{align*}
(F\times_FF)(id_x) &: (F\times_FF)(x) = \{3\} \to  (F\times_FF)(x)=\{3\} \ \  \text{and} \\
(F\times_FF)(f) &: (F\times_FF)(y) = U \to  (F\times_FF)(y)=U, 
\end{align*} 
which are distinct maps although both $id_x$ and $f$ are in $\sigma$. Observe that $\eta$ is not a morphism in $\natural\mathsf{Sets}^{(\C, S)}$ because 
$x \stackrel{0}{\sim} y$ 
but 
$\eta_x \neq \eta_y$.  
\end{ex}

\section{Geometric morphisms and computational examples}

With the general theory of topoi, we consider a way for computing cohomology of colored categories.  

Let $u : (\C, S) \to (\D, S')$ be a morphism of colored categories. 
Then we see that the functor $u^* : \natural\mathsf{Sets}^{(\D, S')} \to \natural\mathsf{Sets}^{(\C, S)}$ induced by 
$u$ preserves finite limits and arbitrary colimits. 
By virtue of the Special Adjoint Functor Theorem (\cite[page 129]{Mac}), we have a right adjoint $u_*$ to the functor $u^*$; 
see also \cite[0.46 Corollary and 7.13 Proposition]{J}.  This gives a geometric morphism 
$(u^*, u_*) : \natural\mathsf{Sets}^{(\C, S)} \to \natural\mathsf{Sets}^{(\D, S')}$ of topoi. As a consequence,  
we have the Leray spectral sequence $\{E_r^{*,*}, d_r\}$ with 
$$
E_2^{p,q} \cong H^p((\D, S'), (R^qu_*)(M))
$$
converging to $H^*((\C, S), M)$ with coefficients in an object 
$M$ of $\mathsf{Mod}^{(\C, S)}$; see \cite[8.17 Proposition]{J}. 
Moreover, for an abelian object $B$ in $\mathsf{Sets}^{(\D, S')}$, one has a homomorphism 
$u^* : H^*((\D, S'),  B) \to H^*((\C, S),  u^*B)$; see \cite[8.17 Proposition(i)]{J}.  
Furthermore, Kan extensions enable us to obtain adjoint functors 
$$
\xymatrix@C35pt@R25pt{
\mathsf{Sets}^{c[(\D, S')]} \ar[r]_-{c[u]^*}
& \mathsf{Sets}^{c[(\C, S)]}  \ar@/^7mm/[l]^-{\text{Ran}_{u}}_-{\perp} 
 \ar@/_6mm/[l]_-{\text{Lan}_{u}}^-{\perp} \\
\mathsf{Sets}^{(\D, S')} \ar[r]_-{u^*} \ar[u]^{\theta}_{\cong}
& \mathsf{Sets}^{(\C, S)}. \ar[u]_{\theta}^{\cong}
}
$$
if $(\C, S)$ and $(\D, S')$ are naturally colored categories.  
Observe that $\theta$ is an isomorphism in Theorem \ref{thm:module} and that the square is commutative. 

\begin{rem}
The relative schemoid cohomology of $u :  (\C, S) \to (\D, S')$ defined in \cite[Definition 2.7]{K-Momose} is indeed the submodule $E_2^{*, 0}$ in the $E_2$-term 
of the Leray spectral sequence mentioned above. 
\end{rem}

We give a computational example of cohomology of a colored category by using the Leray spectral sequence.  

\begin{ex}\label{ex:computaiton}
Let $(\C, S)$ be a colored subcategory of a schemoid $(P(K), S_K)$ associated with an abstract simplicial complex $K$; that is, 
$\C$ is a subposet of the face poset $P(K)$ of $K$ and $S=\{\widetilde{\sigma} \cap mor \C \mid \widetilde{\sigma} \in S_K\}$. Observe that 
the partition $S_K$ of morphisms of the poset $P(K)$ 
consists of sets $\widetilde{\sigma} := \{ \tau \to \nu \mid \nu\backslash\tau = \sigma \}$ for $\sigma \in K\cup\{\varnothing \}$; 
see \cite[Lemma A.1]{K-Momose} and the discussion before \cite[Remark A.3]{K-Momose} for the schemoid $(P(K), S_K)$. 

Let $({\mathcal N}, len)$ be the schemoid whose underlying category ${\mathcal N}$ 
consists of non-negative integers as objects 
and the one arrow $i \to j$ for objects $i$ and $j$ with $i\leq j$. 
The length of the arrow $i \to j$ is defined to be the difference $j -i$.  
Then the length gives the partition $len$ of $mor{\mathcal N}$; see \cite[Example 4.2]{K-Momose}. 
It is readily seen that $({\mathcal N}, len)$ is natural. 
By {\it collapsing} the Hasse diagram of the poset $\C$ horizontally, we have a morphism 
$u : (\C, S) \to ({\mathcal N}, len)$ of colored categories; see \cite[Remark A.2]{K-Momose} for more details. 
Let $M$ be an object in $\mathsf{Mod}^{(\C, S)}$. Consider the Leray spectral sequence $\{E_r^{*,*}, d_r\}$ converging to 
$H^*((\C, S), M)$. We have the composite 
$$
\Phi : \mathsf{Mod}^{({\mathcal N}, len)} \stackrel{\simeq}{\longrightarrow} \mathsf{Mod}^{c[({\mathcal N}, len)]} 
\stackrel{\simeq}{\longrightarrow} {\mathbb Z}[\sigma]\text{-}\mathsf{Mod}
$$
of equivalences of categories. The first equivalence follows from Theorem \ref{thm:module} 
and Mitchell's embedding theorem gives the second one. It follows from Remark \ref{rem:monoids}  that 
$c[({\mathcal N}, len)]$ is the free category generated by a endomorphism $\sigma$ with only one object.  
This yields isomorphisms 
\begin{align*}
E_2^{p, q} &\cong  H^p(({\mathcal N}, len), (R^qu_*)(M)) \\
&\cong \text{Ext}^p_{{\mathbb Z}[\sigma]}({\mathbb Z}, \Phi (R^qu_*)(M))\\
&\cong H^p(\text{Hom}(\wedge(\tau), \Phi (R^qu_*)(M)); \delta), 
\end{align*}
where the differential $\delta$ is defined by $\delta(f)(\tau) =  \sigma f(1)$. 
The Koszul resolution $({\mathbb Z}[\sigma]\otimes \wedge(\tau), d) \to {\mathbb Z}$ 
of ${\mathbb Z}$ as a ${\mathbb Z}[\sigma]$-module, in which $d(\tau) = \sigma$ and $\deg \tau = -1$, 
gives rise to the last isomorphism. There is no element with degree less than or equal to $-2$ in the Koszul resolution.  
Thus, we see that $E_2^{p,*}=0$ if $p\geq 2$ and hence all differentials in the $E_2$-term are trivial. 
Since the spectral sequence collapses at the $E_2$-term, it follows that $E_2^{*, *}\cong E_\infty^{*, *}$.  It turns out that 
$
E_2^{p,q} \cong F^pH^{p+q}((\C, S), M)/F^{p+1}H^{p+q}((\C, S), M).  
$
\end{ex}

\begin{rem}
We consider again the schemoid $(P(K), S_K)$ arising from an abstract simplicial complex $K$ with $K_0$ the set of vertices.  
Since $(P(K), S_K)$ is natural, it follows from Remark \ref{rem:monoids} and the proof of \cite[Lemma A.1]{K-Momose} that 
the small category $c[(P(K), S_K)]$ is a monoid of the form 
$$
\langle \widetilde{\sigma} \in S_K \mid \widetilde{\tau}\widetilde{\mu} =\widetilde{\sigma}  \ \text{if} \  p_{\widetilde{\tau}\widetilde{\mu}}^{\widetilde{\sigma}} \neq 0 \rangle, 
 \ \text{where} \ \ 
 p^{\widetilde{\sigma}}_{\widetilde{\tau} \widetilde{\mu}}= \begin{cases}
1 & \text{if $\sigma =  \tau \sqcup \mu$}  \\
0 & \text{otherwise}. 
\end{cases}
$$
Thus we see that the category algebra ${\mathbb Z}c[(P(K), S_K)]$ is isomorphic to the algebra $R_K:= T(x_i)/(x_ix_j - x_jx_i \mid \{i, j\} \in K)$, 
where $T(x_i)$ denotes the tensor algebra over ${\mathbb Z}$ generated by elements $x_i$ with indexes in $K_0$. 
In fact, a homomorphism $\varphi : {\mathbb Z}\langle \widetilde{\sigma} \in S_K\rangle \to R_K$ of algebras 
can be defined by $\varphi(\widetilde{\sigma}) = x_{i_1}\cdots x_{i_l}$ if $\sigma = \{i_1 , ..., i_l \}$. Moreover, we define a homomorphism 
$\psi : T(x_i) \to {\mathbb Z}c[(P(K), S_K)]$ of algebras by $\psi(x_i) = \widetilde{\{i\}}$. It follows that $\varphi$ induces an isomorphism 
from ${\mathbb Z}c[(P(K), S_K)]$ to $R_K$ with the inverse given by $\psi$. The same argument yields that the monoid 
${\mathbb Z}c[(P(K), S_K)]$ is isomorphic to a monoid of the form 
${\mathcal M}: = \langle \{i \} \in K_0 \mid \{i\}\{j\} =\{j\}\{i\} \ \text{if} \  \{i, j\} \in K \rangle.$ 
In consequence, the discussion in Remark \ref{rem:equivalences} allows us to obtain an isomorphism 
$\mathsf{Sets}^{(P(K), S_K)} \cong \mathsf{Sets}^{\mathcal M}$ of topoi and an equivalence 
$\mathsf{Mod}^{(P(K), S_K)} \simeq R_K\text{-}\mathsf{Mod}$ of abelian categories. 

An unsatisfactory feature of the result is that the category 
$\mathsf{Mod}^{(P(K), S_K)}$ depends only on the $0$ and $1$-simplices of $K$. Indeed, for simplicial complexes $K$ and $K'$, 
the algebras $R_K$ and $R_{K'}$ are isomorphic if the complex of $1$-skeletons of $K$ 
is isomorphic to that of $K'$. However, the result \cite[Proposition A.5]{K-Momose} asserts that the Bose-Mesner algebra of $(P(K), S_K)$  
is isomorphic to the {\it squrefree} Stanley-Reisner ring. 
It will be expected that the study of the schemoid $(P(K), S_K)$ is developed with the Bose-Mesner algebra. 
\end{rem}

We here recall that the $q$-ary Hamming scheme $H(n, q)$ with length $n$ consists of the set $X=\{0, ..., q-1\}^n$ and the partition 
$S_H=\{\sigma_l\}_{0\leq l  \leq n}$ of $X\times X$, where $n\geq 1$ and 
$\sigma_l = \{((x_1, ..., x_n), (y_1, ..., y_n)) \mid \sharp \{i \mid x_i\neq y_i\}= l \}. $
The following result asserts that the cohomology of a schemoid, 
which has a particular morphism to $H(n, 2)$ for some $n$,  is non-vanishing everywhere.   

\begin{prop} \label{prop:App} Let $(\C, S)$ be a colored category. Suppose that $n \geq 1$ and that there exist a morphism 
$u : (\C, S) \to  \jmath H(n, 2)$ of colored categories and an element $\tau$ in the partition $S$ such that 
 $u(\tau) \subset \sigma_{2m+1}$ for some $m$ and 
$\tau$ has an invertible morphism in itself; that is, there exists an invertible morphism $f$ in $\tau$ such that $f^{-1}$ is also 
in $\tau$. 
Then one has an epimorphism  from the cohomology 
$H^*((\C, S), \underline{{\mathbb Z}/2})$ to the group cohomology $H^*({\mathbb Z}/2, {\mathbb Z}/2)$. In particular,  
$H^k((\C, S), \underline{{\mathbb Z}/2}) \neq 0$ for any $k$. 
\end{prop}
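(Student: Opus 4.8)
The plan is to reduce to the cohomology of small categories via Theorem \ref{thm:module}, to exhibit the group $\mathbb{Z}/2$ as a retract of the category $c[(\C,S)]$ using the invertible morphism $f$, and then to invoke functoriality of cohomology. First I would pass to small categories: by Theorem \ref{thm:module} the isomorphism $\pi^*$ identifies $\mathsf{Mod}^{c[(\C,S)]}$ with $\natural\mathsf{Mod}^{(\C,S)}$ and sends constant sheaves to constant sheaves, so Definition \ref{defn:cohomology} gives a natural isomorphism
$$
H^*((\C,S), \underline{\mathbb{Z}/2}) \;\cong\; \mathrm{Ext}^*_{\mathsf{Mod}^{c[(\C,S)]}}(\underline{\mathbb Z}, \underline{\mathbb{Z}/2}),
$$
i.e. the cohomology of the small category $c[(\C,S)]$ with constant coefficients. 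The morphism $u$ induces a functor $c[u] : c[(\C,S)] \to c[\jmath H(n,2)]$. Since all identities of $\jmath H(n,2)$ lie in $\sigma_0$, the category $c[\jmath H(n,2)]$ has a single object, and by Remark \ref{rem:monoids} it is the monoid $\langle \sigma_l \mid \sigma_l\sigma_k = \sigma_j \text{ if } p^{\sigma_j}_{\sigma_l\sigma_k}\ne 0\rangle$; a direct computation with the structure constants of $H(n,2)$ (which identifies all even-indexed, resp. all odd-indexed, cells, the odd class having order two) shows $c[\jmath H(n,2)]\cong\mathbb{Z}/2$ via $\sigma_l\mapsto l\bmod 2$. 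Hence $c[u]$ sends the cell $\tau$ to the nontrivial element $1\in\mathbb{Z}/2$.

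Next I would construct a section. Because $f$ and $f^{-1}$ both lie in $\tau$, the pair $(s(f), s(f^{-1})) = (s(f), t(f))$ witnesses $s(f)\stackrel{0}{\sim}t(f)$, so $[s(f)] = [t(f)] =: \ast$ in $c[(\C,S)]$ and $\tau$ is an endomorphism of $\ast$. Furthermore $f^{-1}f = \mathrm{id}_{s(f)}$ lies in some cell $\mu$ with $\mathrm{id}_{s(f)}\in\mu$, so the sequence $\tau\tau$ satisfies $\tau\tau\stackrel{c}{\sim}\mu$, and since $\mu = \mathrm{id}_\ast$ in $c[(\C,S)]$ we get $\tau\cdot\tau = \mathrm{id}_\ast$; as $c[u](\tau) = 1\ne\mathrm{id}$ this also forces $\tau\ne\mathrm{id}_\ast$, so $\tau$ has order exactly two. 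Therefore sending the unique object of the one-object category $B\mathbb{Z}/2$ associated with the group $\mathbb{Z}/2$ to $\ast$ and the generator to $\tau$ defines a functor $\iota : B\mathbb{Z}/2 \to c[(\C,S)]$, and by construction $c[u]\circ\iota : B\mathbb{Z}/2\to\mathbb{Z}/2$ carries the generator to the generator, hence equals $\mathrm{id}_{B\mathbb{Z}/2}$.

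Finally I would apply cohomology. As in Section 4, a functor of small categories induces a geometric morphism of the associated classifying topoi and thus, by \cite[8.17 Proposition]{J}, a contravariantly functorial homomorphism on cohomology with constant coefficients. From $c[u]\circ\iota = \mathrm{id}$ one obtains $\iota^*\circ (c[u])^* = \mathrm{id}$ on $H^*(B\mathbb{Z}/2, \underline{\mathbb{Z}/2})$, so $\iota^*$ is a split epimorphism. Combining this with the isomorphism of the first paragraph and with $H^*(B\mathbb{Z}/2, \underline{\mathbb{Z}/2})\cong H^*(\mathbb{Z}/2, \mathbb{Z}/2)$ (equivalently Corollary \ref{cor:Hamming} applied to $\jmath H(n,2)$), we obtain an epimorphism $H^*((\C,S), \underline{\mathbb{Z}/2})\twoheadrightarrow H^*(\mathbb{Z}/2, \mathbb{Z}/2)$. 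Since $H^k(\mathbb{Z}/2, \mathbb{Z}/2)\cong\mathbb{Z}/2\ne 0$ for every $k\ge 0$, it follows that $H^k((\C,S), \underline{\mathbb{Z}/2})\ne 0$ for all $k$.

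I expect the main obstacle to be the well-definedness of $\iota$ in the second paragraph: one must unwind the presentation of $c[(\C,S)]$ (the relations $\stackrel{0}{\sim}$, $\stackrel{ob}{\sim}$ and $\stackrel{c}{\sim}$) carefully enough to see that the cell $\tau$ really becomes an automorphism of a single object squaring to the identity. This is entangled with the identification $c[\jmath H(n,2)]\cong\mathbb{Z}/2$ and with tracking the image of $\sigma_{2m+1}$, both of which rely on the non-vanishing of the relevant structure constants of the Hamming scheme $H(n,2)$.
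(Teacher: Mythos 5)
Your argument is correct, and it realizes the same underlying strategy as the paper --- exhibit $H^*(\mathbb{Z}/2,\mathbb{Z}/2)$ as a retract of $H^*((\C,S),\underline{\mathbb{Z}/2})$ by mapping a copy of $\mathbb{Z}/2$ into $(\C,S)$ via the invertible morphism $f\in\tau$ and back out via $u$ --- but the implementation is genuinely different. The paper constructs a morphism of colored categories $v:\widetilde{S}(\mathbb{Z}/2)\to(\C,S)$ with $v(0\to 1)=f$ and then cites \cite[Proposition 4.3]{K-Momose} to conclude that $u\circ v$ induces a Morita equivalence $\mathsf{Mod}^{\jmath H(n,2)}\simeq\mathbb{Z}[\mathbb{Z}/2]\text{-}\mathsf{Mod}$, so that $v^*\circ u^*$ is an isomorphism on cohomology and $v^*$ is the desired epimorphism. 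You instead transport everything through Theorem \ref{thm:module} to the small categories $c[-]$, compute $c[\jmath H(n,2)]\cong\mathbb{Z}/2$ directly from the mod--$2$ additivity of Hamming distances (the paper only establishes this later, in Corollary \ref{cor:trivial_one}, and there by a detour through Corollary \ref{cor:Hamming}), and exhibit $B\mathbb{Z}/2$ as a categorical retract of $c[(\C,S)]$ via $\iota$ and $c[u]$. What your route buys is self-containedness --- no appeal to the external Morita-equivalence result --- and also slightly greater robustness: the paper's $v$ must send the cell $\{id_0,id_1\}$ of $\widetilde{S}(\mathbb{Z}/2)$ into a single cell of $S$, which tacitly requires $id_{s(f)}\sim_S id_{t(f)}$, a condition not among the stated hypotheses; your construction only needs $[s(f)]=[t(f)]$ in $c[(\C,S)]$, which you correctly deduce from $f,f^{-1}\in\tau$ via the relation $\stackrel{0}{\sim}$. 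Your verifications that $\tau$ becomes an endomorphism of a single object with $\tau^2=id_\ast$ (via $\tau\tau\stackrel{c}{\sim}\mu$ with $id_{s(f)}\in\mu$), and that $c[u]\circ\iota$ hits the nontrivial element because $2m+1$ is odd, are exactly the points that need checking, and they all go through.
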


\begin{proof}
Let $f$ be an invertible morphism in $\tau$. 
Then we have a well-defined morphism $v : \widetilde{S}({\mathbb Z}/2) \to (\C, S)$ of schemoids 
with $v(0 \to 1) = f$. Since $u(f) \in \sigma_{2m+1}$, it follows from the proof of \cite[Proposition 4.3]{K-Momose} that the composite 
$u\circ v :  \widetilde{S}({\mathbb Z}/2) \to (\C, S) \to \jmath H(n, 2)$ of functors gives rise to 
a Morita equivalence between $\widetilde{S}({\mathbb Z}/2)$ and the Hamming scheme; that is, 
one has the equivalence $(u\circ v)^* :  \mathsf{Mod}^{\jmath H(n, 2)} \stackrel{\simeq}{\to} 
\mathsf{Mod}^{\widetilde{S}({\mathbb Z}/2)} 
\simeq {\mathbb Z}[{\mathbb Z}/2]\text{-}\mathsf{Mod}$ of abelian categories.  
Therefore, the composite 
$$
\xymatrix@C20pt@R25pt{
H^*(\jmath H(n, 2), \underline{{\mathbb Z}/2}) \ar[r]^-{u^*} & H^*((\C, S), 
\underline{{\mathbb Z}/2}) \ar[r]^-{v^*} & H^*({\mathbb Z}/2, {\mathbb Z}/2)
}
$$
is an isomorphism.
Observe that the inverse image functor $u^*$ sends the constant sheaf $\underline{{\mathbb Z}/2}$ to itself. 
This implies that $u^*$ is a monomorphism. We have the result. 
\end{proof}

We apply Proposition \ref{prop:App} to a very small colored category. 

\begin{ex} Let $(\C, S)$ be the colored category defined by the left-hand side diagram below. 
$$
\xymatrix@C25pt@R20pt{
(\C, S) : \!\!\!\!\!\! &{00}  \ar@{<->}[r] \ar@{~>}[d] &  {01}, \ar@{.>}[ld] & H(2,2): \!\!\!\! & {00}  \ar@{<->}[r] \ar@{<.>}[rd]  \ar@{<->}[d] &  {01}  \ar@{<->}[d] \\
             & {10}                                         &                            &                 & {10} \ar@{<->}[r] \ar@{<.>}[ru] &  {11}
}
$$
Observe that $(\C, S)$ is not a schemoid. 
We define a partition-preserving functor $u : (\C, S) \to H(2,2)$ by $u(ij) =ij$. 
Since $u$ sends the partition, which contains $00 \to 01$ and hence also $01 \to 00$,  to 
$\sigma_1$, it follows that the functor satisfies the condition in Proposition \ref{prop:App}. Then $H^k((\C, S), \underline{{\mathbb Z}/2}) \neq 0$ for any $k$. 
On the other hand, 
the underlying small category $\C$ of the discrete schemoids $\calK U(\C, S)$ has the terminal object. Thus we see that 
$H^0(\calK U(\C, S), \underline{{\mathbb Z}/2}) = {\mathbb Z}/2$ and $H^k(\calK U(\C, S), \underline{{\mathbb Z}/2}) = 0$ for $k > 0$. In fact, 
the classifying space of the category $\calK U(\C, S)$ is contractible. 
\end{ex}

Remark \ref{rem:equivalences} asserts that the problem on the Morita equivalences between natural schemoids $(\C, S)$ 
is reduced to that between the category algebras of the associated small categories $c[(\C, S)]$; 
see \cite[Section 2]{K-Momose} for the Morita equivalence between schemoids. 
We here address such small categories associated with the Hamming schemes and the Johnson schemes; 
see also Proposition \ref{prop:H}. 

\begin{rem}\label{rem:order}
Suppose that $(X, S)$ is symmetric; that is, for any element $\sigma$ in $S$, we have 
$\sigma = \sigma^*$, where  $\sigma^*:= \{(x, y) \in X\times X \mid (y, x) \in \sigma\}$. Then the order of 
each element of $c[\jmath(X, S)]$ is at most two. 
\end{rem}

\begin{cor}\label{cor:trivial_one}
\text{\em (}cf. \cite[Proposition 4.2]{Numata_2}\text{\em )} Let $l$ and $n$ be integers greater than or equal to $1$ and $q$ an integer greater than $2$. 
Then, the group $c[\jmath H(l, q)]$ is trivial while $c[\jmath H(n, 2)] = {\mathbb Z}/2$. 
In consequence, there is no non-trivial morphism $u$ of schemoids from $\jmath H(l, q)$ to $\jmath H(n, 2)$ 
such that $u(\sigma_s) \subset \sigma_{2m+1}$ for some $s$ and $m$. 
\end{cor}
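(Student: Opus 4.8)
The plan is to compute the monoids $c[\jmath H(l,q)]$ and $c[\jmath H(n,2)]$ explicitly — by Proposition~\ref{prop:H} they are in fact groups — and then deduce the non-existence statement from functoriality of $c[-]$. The engine is Remark~\ref{rem:monoids}: every identity morphism of $\jmath H(l,q)$ has color $\sigma_0$, so $id_x\sim_S id_y$ for all objects, and since $\jmath$ of an association scheme is a schemoid whose structure constants are those of the scheme, the remark presents $c[\jmath H(l,q)]$ as the monoid on the colors $\sigma_a$ with relations $\sigma_a\sigma_b=\sigma_c$ exactly when there exist $x,y,z$ in the Hamming cube with $d(x,y)=b$, $d(y,z)=a$, $d(x,z)=c$ (that is, $p^{\sigma_c}_{\sigma_a\sigma_b}\neq0$). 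Hence the only inputs needed are the elementary facts $p^{\sigma_0}_{\sigma_1\sigma_1}\neq0$; $p^{\sigma_{a+1}}_{\sigma_1\sigma_a}\neq0$ for $0\le a\le l-1$ (change one further coordinate among those on which the endpoints differ); $p^{\sigma_{a-1}}_{\sigma_1\sigma_a}\neq0$ for $1\le a\le l$ (change a differing coordinate back and flip an untouched one); and $p^{\sigma_1}_{\sigma_1\sigma_1}=q-2$, which is nonzero precisely when $q>2$.

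For $q>2$, I would conclude that $c[\jmath H(l,q)]$ is trivial: $p^{\sigma_1}_{\sigma_1\sigma_1}\neq0$ and $p^{\sigma_0}_{\sigma_1\sigma_1}\neq0$ give $\sigma_1=\sigma_1\sigma_1=\sigma_0$, so $\sigma_1$ is the identity element; then $\sigma_{a+1}=\sigma_1\sigma_a=\sigma_a$ for $0\le a\le l-1$, and induction from $\sigma_0$ forces every generator to be the identity.

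For $q=2$, I would prove $c[\jmath H(n,2)]\cong\mathbb{Z}/2$ by bounding the order from both sides. Above: $p^{\sigma_0}_{\sigma_1\sigma_1}\neq0$ gives $\sigma_1^2=\sigma_0$; if $n\ge2$ then $p^{\sigma_2}_{\sigma_1\sigma_1}\neq0$ gives $\sigma_2=\sigma_0$, and for $3\le a\le n$ the constants $p^{\sigma_a}_{\sigma_1\sigma_{a-1}}\neq0$ and $p^{\sigma_{a-2}}_{\sigma_1\sigma_{a-1}}\neq0$ give $\sigma_a=\sigma_1\sigma_{a-1}=\sigma_{a-2}$; so every generator equals $\sigma_0$ or $\sigma_1$ and the group is a quotient of $\langle\sigma_1\mid\sigma_1^2=\sigma_0\rangle\cong\mathbb{Z}/2$ (the case $n=1$ is immediate, as there are then no further relations). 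Below: over $\mathbb{F}_2^{\,n}$ the support of $x-z$ is the symmetric difference of those of $x-y$ and $y-z$, so $d(x,z)\equiv d(x,y)+d(y,z)\pmod2$; hence $p^{\sigma_c}_{\sigma_a\sigma_b}\neq0$ forces $c\equiv a+b\pmod2$, the assignment $\sigma_a\mapsto\bar a$ is compatible with all defining relations, and yields a surjective homomorphism $\phi\colon c[\jmath H(n,2)]\to\mathbb{Z}/2$. Combining the two bounds gives the isomorphism, with $\phi$ sending the class of $\sigma_{2m+1}$ to $\overline{2m+1}=\bar1$.

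Finally, for the non-existence statement: suppose $u\colon\jmath H(l,q)\to\jmath H(n,2)$ is a morphism of schemoids with $u(\sigma_s)\subset\sigma_{2m+1}$ for some $s,m$ (so $2m+1\le n$, or else that color does not exist). As in Remark~\ref{rem:equivalences}, $u$ induces a group homomorphism $c[u]\colon c[\jmath H(l,q)]\to c[\jmath H(n,2)]$ which on color classes sends $[\sigma_s]$ to $[\sigma_{2m+1}]$. Since $q>2$ the source is trivial, so $c[u]$ is trivial and $[\sigma_{2m+1}]$ is the identity of $c[\jmath H(n,2)]$; but $\phi([\sigma_{2m+1}])=\bar1\neq\bar0$, a contradiction. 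Hence there is no morphism $u$ with this color property, and in particular no non-trivial one, proving the corollary. The main obstacle I expect is bookkeeping rather than ideas: getting the coordinate counts for the structure constants right across the stated ranges and for both values of $q$, and checking that $c[-]$ sends a morphism of colored categories to the evident functor on the quotient categories, so that $c[u]$ does act on color classes as claimed.
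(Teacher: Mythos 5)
Your proposal is correct, and it diverges from the paper's argument in one substantive place. For the triviality of $c[\jmath H(l,q)]$ with $q>2$ and for the generation of $c[\jmath H(m,r)]$ by $\sigma_1$, you and the paper do essentially the same thing (the paper invokes symmetry of the scheme via Remark \ref{rem:order} to bound the order of $\sigma_1$ by $2$, and exhibits the composite $(0,\dots,0)\to(1,0,\dots,0)\to(2,0,\dots,0)$ to get $\sigma_1^2=\sigma_1$; your $\sigma_1=\sigma_1^2=\sigma_0$ from $p^{\sigma_1}_{\sigma_1\sigma_1}=q-2\neq 0$ and $p^{\sigma_0}_{\sigma_1\sigma_1}\neq 0$ is the same computation). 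The real difference is how one sees that $c[\jmath H(n,2)]$ is \emph{not} trivial: the paper argues by contradiction through Corollary \ref{cor:Hamming} — if the group were trivial, $\mathsf{Mod}^{\jmath H(n,2)}$ would be equivalent to $\mathbb{Z}$-modules and $H^*(\jmath H(n,2),\underline{\mathbb{Z}/2})$ would be acyclic, contradicting the identification with $H^*(\mathbb{Z}/2,\mathbb{Z}/2)$ — which ultimately rests on the Morita equivalence of \cite[Proposition 4.3]{K-Momose}. You instead build the explicit parity surjection $\sigma_a\mapsto\bar a$ onto $\mathbb{Z}/2$ from the mod-$2$ additivity of Hamming distance over $\mathbb{F}_2^n$; this is more elementary and self-contained, and it also makes the final contradiction cleaner, since $\phi([\sigma_{2m+1}])=\bar 1$ directly shows $[\sigma_{2m+1}]\neq 1$ (the paper's phrasing ``$c[u]$ sends $\sigma_1$ to $\sigma_1$'' glosses over the reduction $\sigma_{2m+1}=\sigma_1$ in $c[\jmath H(n,2)]$, which your argument avoids needing). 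The only blemishes are cosmetic: a couple of your parenthetical justifications for the nonvanishing structure constants (e.g.\ ``flip an untouched one'' for $p^{\sigma_{a-1}}_{\sigma_1\sigma_a}$) are loosely worded, though the facts themselves are correct.
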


\begin{proof} 
For any positive integers $m$ and $r$, 
the Hamming scheme $\jmath H(m, r)$ is symmetric.  Moreover,  
Remarks \ref{rem:monoids} and \ref{rem:order} allow us to deduce that $c[\jmath H(m, r)]$ 
is a group generated by the single element $\sigma_1$ with order at most $2$. In particular, 
we see that $c[\jmath H(n, 2)] = {\mathbb Z}/2$ for any $n$.  In fact, if $c[\jmath H(n, 2)]$ is trivial, then the equivalence 
$ \mathsf{Mod}^{\jmath H(n, 2)} \simeq {\mathbb Z}[c[\jmath H(n, 2)]]\text{-}\mathsf{Mod} \simeq  {\mathbb Z}\text{-}\mathsf{Mod}$ 
mentioned in Remark \ref{rem:equivalences} deduces that $H^*(\jmath H(n, 2); \underline{{\mathbb Z}/2})$ is acyclic. 
However, Corollary \ref{cor:Hamming} implies that the cohomology is not acyclic, which is a contradiction.   

Consider the Hamming scheme $H(l, q)$ with $q > 2$. We have a sequence of morphisms 
$(0, 0, 0, ..., 0) \stackrel{f_1}{\longrightarrow} (1, 0, 0, ..., 0) \stackrel{f_2}{\longrightarrow} (2, 0, 0, ..., 0)$
in the underlying category of $\jmath H(l, q)$ for which each $f_i$ is in $\sigma_1$. Moreover, it is readily seen that the composite $f_2 \circ f_1$ is also in 
$\sigma_1$ and hence $\sigma_1^2 = \sigma_1$ in the group $c[\jmath H(l, q)]$. 
Since the order of $\sigma_1$ is at most $2$, it follows that 
$c[\jmath H(l, q)]$ is trivial. 

Suppose that there exists a non-trivial morphism $u : \jmath H(l, q) \to \jmath H(n, 2)$ of schemoids which satisfies the condition mentioned in the assertion. 
Then the homomorphism $c[u] : c[\jmath H(l, q)] \to c[\jmath H(n, 2)]$ induced by $u$ sends $\sigma_1$ to $\sigma_1$. 
This yields that $\sigma_1 =1$ in $c[\jmath H(n, 2)]$, which is a contradiction. 
\end{proof}

\begin{rem} For an integer $q > 2$, 
we may have a non-trivial morphism $u : \jmath H(l, q) \to \jmath H(n, 2)$ with $\sigma_s \subset \sigma_{2m}$ for some $s$ and $m$. For example, 
consider a morphism $u : \jmath H(1, 3) \to \jmath H(3, 2)$ of schemoids which is defined by $u(0) = 010$, $u(1) = 001$ and $u(2) = 111$. Then it is readily seen that 
$u(\sigma_1) \subset \sigma_2$.  
\end{rem}

\begin{rem}
Let $V$ be a finite set of $v$ elements  and $d$ an integer with $d\leq \frac{v}{2}$. Then we have the Johnson scheme $J(v, d)=(X, S)$, where 
$X = \{x \mid x \subset V, \sharp x = d\}$ $R_i= \{(x, y) \in X\times X \mid \sharp(x\cap y) = d-i \}$ and $S$ is the partition of $X\times X$ 
consisting of the sets $R_i$ for $i = 0, ...,d$. 
The same argument as in the proof of Corollary \ref{cor:trivial_one} enables one to conclude that 
$c[\jmath J(v, d)]$ is trivial if $(v, d)\neq (2,1)$ and that $c[\jmath J(2, 1)]= {\mathbb Z}/2$.  
\end{rem}

\section{The standard representation of an association scheme and a spectral sequence} 

The standard representation of an association scheme (abbreviated AS henceforward) on the Bose--Mesner algebra of the AS    
plays an important role in representation theory of ASs. 
In fact, there exist ASs for which we obtain an isomorphism in the Bose--Mesner algebras but not in the standard representations.  
Therefore, one might expect that the representations reflect combinatorial data which ASs have. 
We indeed observe such a phenomenon in modular representation theory of ASs; see \cite[\S 5]{H-Y}.

Let $(X, S_X)$ be an AS of order $n$ and ${\mathbb Z}(X, S_X)$ the Bose--Mesner algebra of $(X, S_X)$. 
Then we have the standard representation ${\mathbb Z}(X, S_X) \to M_X$, namely the inclusion to the $n\times n $ matrix algebra with integer coefficients.   
The extension functor $\text{Ext}_{{\mathbb Z}(X, S_X)}^{*}({\mathbb Z}, M_X)$ is an invariant for isomorphisms of ASs. More precisely, we have 

\begin{prop} Let $\varphi : (X, S_X) \to (Y, S_Y)$ be an isomorphism of ASs; see \cite[1.7]{Z_book1}. Then $\varphi$ induces an isomorphism 
$\text{\em Ext}_{{\mathbb Z}(X, S_X)}^*({\mathbb Z}, M_X) \cong \text{\em Ext}_{{\mathbb Z}(Y, S_Y)}^*({\mathbb Z}, M_Y).
$
\end{prop}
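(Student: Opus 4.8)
The plan is to show that an isomorphism $\varphi : (X, S_X) \to (Y, S_Y)$ of association schemes induces a ring isomorphism on Bose--Mesner algebras that is compatible with the standard representations, and then to transport the $\mathrm{Ext}$-groups along this data. First I would recall that an isomorphism of ASs is, by definition, a bijection $\varphi : X \to Y$ together with a bijection $\varphi_S : S_X \to S_Y$ such that $(x, x') \in \sigma$ if and only if $(\varphi(x), \varphi(x')) \in \varphi_S(\sigma)$ for all $x, x' \in X$ and $\sigma \in S_X$. From $\varphi$ one obtains a permutation matrix $P = P_\varphi \in M_X$ (identifying $M_X \cong M_Y$ via $\varphi$ on indices), and the key elementary observation is that conjugation by $P$ sends the adjacency matrix $A_\sigma$ of $\sigma \in S_X$ to the adjacency matrix $A_{\varphi_S(\sigma)}$ of $\varphi_S(\sigma) \in S_Y$. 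Consequently $\Phi := P(-)P^{-1}$ restricts to a ring isomorphism $\Phi_0 : {\mathbb Z}(X, S_X) \stackrel{\cong}{\to} {\mathbb Z}(Y, S_Y)$, and the square relating $\Phi_0$, $\Phi$, and the two standard representations ${\mathbb Z}(X, S_X) \hookrightarrow M_X$ and ${\mathbb Z}(Y, S_Y) \hookrightarrow M_Y$ commutes by construction.

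Next I would invoke the standard change-of-rings principle for $\mathrm{Ext}$: a ring isomorphism $\Phi_0 : R \to R'$ induces an exact equivalence of module categories $\Phi_0^* : R'\text{-}\mathsf{Mod} \to R\text{-}\mathsf{Mod}$ (restriction of scalars along $\Phi_0^{-1}$), which therefore induces isomorphisms $\mathrm{Ext}_{R'}^*(N, N') \cong \mathrm{Ext}_R^*(\Phi_0^* N, \Phi_0^* N')$ for all $R'$-modules $N, N'$. Applying this with $R = {\mathbb Z}(X, S_X)$, $R' = {\mathbb Z}(Y, S_Y)$, $N = {\mathbb Z}$ (the trivial module, i.e. the augmentation, which corresponds under $\Phi_0$ to the trivial module since $\Phi_0$ sends the all-ones matrix $A_{\sigma_0^Y}$-part appropriately and preserves the augmentation; one checks $\Phi_0^*{\mathbb Z} = {\mathbb Z}$), and $N' = M_Y$, I need only identify $\Phi_0^* M_Y$ with $M_X$ as a ${\mathbb Z}(X, S_X)$-module. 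But by the commutativity of the square above, the ${\mathbb Z}(X, S_X)$-action on $M_Y$ obtained by restricting along $\Phi_0$ is precisely the action $a \cdot v = \Phi(a) v$ for $a \in {\mathbb Z}(X, S_X)$, $v \in M_Y = M_X$, and since $\Phi$ is an inner automorphism of $M_X$ given by conjugation by the invertible $P$, the map $v \mapsto P^{-1} v$ is a ${\mathbb Z}(X, S_X)$-module isomorphism $M_X \to \Phi_0^* M_Y$. Chaining these isomorphisms yields $\mathrm{Ext}_{{\mathbb Z}(X, S_X)}^*({\mathbb Z}, M_X) \cong \mathrm{Ext}_{{\mathbb Z}(Y, S_Y)}^*({\mathbb Z}, M_Y)$.

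The only genuinely delicate points are bookkeeping ones: first, verifying that conjugation by the permutation matrix $P_\varphi$ really implements $\varphi_S$ on adjacency matrices (a direct entrywise check using the defining relation of an AS isomorphism), and second, confirming that $\Phi_0$ carries the augmentation ${\mathbb Z}$-module structure on $R$ to that on $R'$ — equivalently, that $\Phi_0$ respects the counit sending each $A_\sigma$ to its valency, which holds because $\varphi_S$ preserves valencies (again immediate from the entrywise description, since $P A_\sigma P^{-1} = A_{\varphi_S(\sigma)}$ forces $n_\sigma = n_{\varphi_S(\sigma)}$). I expect the main obstacle, if any, to be purely notational: being careful about whether $M_Y$ and $M_X$ are literally equal or only identified via $\varphi$, and tracking which module is restricted along $\Phi_0$ versus $\Phi_0^{-1}$. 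No new ideas beyond the classical change-of-rings isomorphism for $\mathrm{Ext}$ are needed.
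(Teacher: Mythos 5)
Your proposal is correct and follows essentially the same route as the paper: both extract from the AS isomorphism a commutative square of algebra isomorphisms lying over the two standard representations and then apply the change-of-rings isomorphism for $\mathrm{Ext}$, identifying the restricted module with $M_X$ (you merely make explicit, via the permutation matrix $P_\varphi$, the square the paper takes for granted). The only quibble is the direction of your final identification: with $\Phi_0 = P(-)P^{-1}$, the ${\mathbb Z}(X,S_X)$-module isomorphism $M_X \to \Phi_0^* M_Y$ is $v \mapsto Pv$ rather than $v \mapsto P^{-1}v$, exactly the sort of bookkeeping slip you yourself flagged as the main hazard.
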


\begin{proof} 
The definition of an isomorphism of ASs gives a commutative diagram
$$
\xymatrix@C20pt@R18pt{
M_X \ar[r]^-{\widetilde{\varphi}}_-{\cong} & M_Y \\
{\mathbb Z}(X, S_X) \ar[u] \ar[r]_-{\varphi}^-{\cong} & {\mathbb Z}(Y, S_Y), \ar[u] }
$$
where vertical maps are inclusions. The lower horizontal isomorphism induces an isomorphism 
$\varphi^* : {\mathbb Z}(Y, S_Y)\text{-}\mathsf{Mod} \stackrel{\cong}{\longrightarrow} {\mathbb Z}(X, S_X)\text{-}\mathsf{Mod}$. 
Thus, we have a diagram 
$$
\xymatrix@C35pt@R15pt{
\text{Ext}_{{\mathbb Z}(Y, S_Y)}^*({\mathbb Z}, M_Y) \ar[r]^-{\varphi^*}_-\cong & \text{Ext}_{{\mathbb Z}(X, S_X)}^*({\mathbb Z}, \varphi^*M_Y) \ar@{=}[d] \\
\text{Ext}_{{\mathbb Z}(X, S_X)}^*({\mathbb Z}, M_X) \ar[r]_-{\text{Ext}(1, \widetilde{\varphi})}^-{\cong} &
\text{Ext}_{{\mathbb Z}(X, S_X)}^*({\mathbb Z}, \widetilde{\varphi}(M_X)) 
}
$$
of isomorphisms between the extension functors. 
The vertical equality follows from the commutative diagram above. This completes the proof.  
\end{proof}

The cohomology of a colored category introduced in Definition \ref{defn:cohomology} relates to the invariant above via 
the Leray spectral sequence.  
Let $u : \jmath(X, S_X) \to (\C, S)$ be a morphism of colored categories and $\pi : (X, S_X) \to Quo(S_X)$ the quotient map, 
where $Quo(S_X)$ stands for the factor scheme by the thin residue of $(X, S_X)$; see Proposition \ref{prop:H} and Appendix A. 
We observe that $\pi$ is an admissible map and hence it induces a morphism 
$\pi : {\mathbb Z}(X, S_X) \to {\mathbb Z}Quo(S_X)$ of algebras; see \cite[Lemma 3.9, Corollary 6.4]{F}. 
Then, we have a diagram of adjoint functors. 
$$
\xymatrix@C20pt@R20pt{
\natural\mathsf{Mod}^{(\C, S)} \ar[r]^-{u^*}
& \mathsf{Mod}^{\jmath(X, S_X)}  \ar@/^6mm/[l]^-{u_*}_-{\perp} \ar[r]^-{\simeq}&  {\mathbb Z}Quo(S_X)\text{-}\mathsf{Mod} 
\ar[r]^-{\pi^*} & {\mathbb Z}(X, S_X)\text{-}\mathsf{Mod}. 
\ar@/_6mm/[l]_-{\pi_!}^-{\perp} 
 \ar@/^6mm/[l]^-{\pi_*}_-{\perp} 
}
 \eqnlabel{add-2}
$$
Here the equivalence in the middle is obtained by Proposition \ref{prop:H} and Remark \ref{rem:equivalences}; see also Section 3. 

\begin{thm}\label{thm:LSS} Under the setting above, one has a first quadrant spectral sequence converging to 
$\text{\em Ext}_{{\mathbb Z}(X, S_X)}^*({\mathbb Z}, M)$ with 
$
E_2^{p,q} \cong H^p((\C, S), (R^q u_*\pi_*)(M)),
$
where $M$ is a left ${\mathbb Z}(X, S_X)$\text{-}module. 
\end{thm}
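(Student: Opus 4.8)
The plan is to obtain the spectral sequence as a Grothendieck spectral sequence for a composite of functors, exactly in the spirit of the Leray spectral sequence already discussed after Definition~\ref{defn:cohomology}. First I would package the right-hand side of diagram \theeqn. Write $\Psi := u_*\pi_*$ for the composite
$$
{\mathbb Z}(X, S_X)\text{-}\mathsf{Mod} \stackrel{\pi_*}{\longrightarrow} {\mathbb Z}Quo(S_X)\text{-}\mathsf{Mod} \stackrel{\simeq}{\longrightarrow} \mathsf{Mod}^{\jmath(X, S_X)} \stackrel{u_*}{\longrightarrow} \natural\mathsf{Mod}^{(\C, S)}.
$$
Each of the three functors is a right adjoint: $\pi_*$ is right adjoint to restriction $\pi^*$ along the algebra map ${\mathbb Z}(X,S_X)\to {\mathbb Z}Quo(S_X)$, the middle arrow is an equivalence (hence exact with an exact adjoint), and $u_*$ is the direct image of the geometric morphism of Theorem~\ref{thm:main} and the discussion in Section~4, so it is right adjoint to $u^*$. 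Consequently $\Psi$ is a right adjoint, with left adjoint $\Psi^{!} := \pi^*\circ(\text{equiv})^{-1}\circ u^*$ (the composite along the bottom, left-pointing maps of \theeqn), which is exact since $u^*$ preserves finite limits and $\pi^*$ is restriction of scalars. In particular $\Psi^{!}$ preserves projectives, so $\Psi$ sends injectives to objects acyclic for $\Gamma := \mathrm{Hom}_{\natural\mathsf{Mod}^{(\C,S)}}(\underline{\mathbb Z},-)$.

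Second, I would identify the composite $\Gamma\circ\Psi$ on the category ${\mathbb Z}(X,S_X)\text{-}\mathsf{Mod}$ with $\mathrm{Hom}_{{\mathbb Z}(X,S_X)}({\mathbb Z},-)$. By adjunction,
$$
\mathrm{Hom}_{\natural\mathsf{Mod}^{(\C,S)}}(\underline{\mathbb Z}, u_*\pi_* M) \cong \mathrm{Hom}_{\mathsf{Mod}^{\jmath(X,S_X)}}(u^*\underline{\mathbb Z}, \pi_* M) \cong \mathrm{Hom}_{{\mathbb Z}Quo(S_X)}(\text{(image of }u^*\underline{\mathbb Z}), M')
$$
after transporting along the equivalence, where $M'$ is $M$ regarded via $\pi^*$-adjunction. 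The point is that $u^*$ carries the constant sheaf $\underline{\mathbb Z}$ on $(\C,S)$ to the constant sheaf on $\jmath(X,S_X)$, which under the equivalence of Proposition~\ref{prop:H} and Remark~\ref{rem:equivalences} corresponds to ${\mathbb Z}$ with trivial $Quo(S_X)$-action; and $\mathrm{Hom}_{{\mathbb Z}Quo(S_X)}({\mathbb Z},\pi_*M)\cong\mathrm{Hom}_{{\mathbb Z}(X,S_X)}(\pi^*{\mathbb Z}, M)=\mathrm{Hom}_{{\mathbb Z}(X,S_X)}({\mathbb Z}, M)$ since $\pi^*{\mathbb Z}={\mathbb Z}$ as the unit of the algebra map is the identity on the coefficient ${\mathbb Z}$. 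Thus $\Gamma\circ\Psi\cong\mathrm{Hom}_{{\mathbb Z}(X,S_X)}({\mathbb Z},-)$, whose right derived functors are $\mathrm{Ext}^*_{{\mathbb Z}(X,S_X)}({\mathbb Z},-)$.

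Third, I would invoke the Grothendieck spectral sequence: since ${\mathbb Z}(X,S_X)\text{-}\mathsf{Mod}$ has enough injectives, $\Psi$ sends injectives to $\Gamma$-acyclics, and $\natural\mathsf{Mod}^{(\C,S)}$ has enough injectives by Theorem~\ref{thm:main}, there is a first-quadrant spectral sequence
$$
E_2^{p,q} = (R^p\Gamma)\big((R^q\Psi)(M)\big) \ \Longrightarrow\ R^{p+q}(\Gamma\Psi)(M) = \mathrm{Ext}^{p+q}_{{\mathbb Z}(X,S_X)}({\mathbb Z}, M).
$$
By Definition~\ref{defn:cohomology}, $(R^p\Gamma)(-) = H^p((\C,S),-)$, and $R^q\Psi = R^q(u_*\pi_*)$; since $\pi_*$ followed by an equivalence is exact (thin residue quotient is a localization with exact induction, so $R^{>0}\pi_* = 0$, or more simply because the middle equivalence and $\pi_*$ compose to an exact functor — one checks $\pi_*$ itself is exact as ${\mathbb Z}(X,S_X)$ is free over ${\mathbb Z}Quo(S_X)$ via the section of scheme quotients), we get $R^q(u_*\pi_*) = (R^qu_*)\circ\pi_*$ where I abuse notation writing $\pi_*$ for the exact composite landing in $\mathsf{Mod}^{\jmath(X,S_X)}$. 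This gives $E_2^{p,q}\cong H^p((\C,S),(R^qu_*\pi_*)(M))$ as claimed.

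The main obstacle is the Grothendieck-acyclicity bookkeeping, and I expect two delicate points. The first is justifying $R^q(u_*\pi_*)=(R^qu_*)\pi_*$: this needs $\pi_*$ (composed with the equivalence) to be exact, which I would derive either from the fact that the factor scheme by the thin residue behaves like a quotient with ${\mathbb Z}(X,S_X)$ finitely generated free as a ${\mathbb Z}Quo(S_X)$-module — citing \cite{F} for the algebra map being admissible — or, failing that, absorb $\pi_*$ into the three-functor version of the Grothendieck spectral sequence. The second is confirming that $\Psi^{!}=u^*\circ\pi^*$ (up to the equivalence) genuinely preserves projectives: $u^*$ has the exact right adjoint $u_*$ only if $u^*$ also has a \emph{left} adjoint $u_!$ (so that $u^*$ is itself a right adjoint, hence preserves projectives only if... ) — here I would instead argue directly that $u^*$ preserves epimorphisms and finite colimits (shown in Section~4) hence preserves projective objects when these are characterized via lifting against epis, together with $\pi^*$ preserving projectives since ${\mathbb Z}(X,S_X)$ is projective over ${\mathbb Z}Quo(S_X)$. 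Modulo these two exactness/projectivity verifications the spectral sequence is formal.
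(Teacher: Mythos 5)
Your proposal is correct and follows essentially the same route as the paper: the Grothendieck spectral sequence for the composite $\Gamma\circ(u_*\pi_*)$ with $\Gamma=\mathrm{Hom}_{\natural\mathsf{Mod}^{(\C,S)}}(\underline{\mathbb Z},-)$, the identification $\Gamma\circ u_*\pi_*\cong \mathrm{Hom}_{{\mathbb Z}(X,S_X)}({\mathbb Z},-)$ via the two adjunctions (tensor-hom for $\pi$, and the argument of \cite[8.17 Proposition]{J} for $u$), and acyclicity of $u_*\pi_*(I)$ coming from exactness of the left adjoint $\pi^*u^*$. Both of your closing ``delicate points'' are actually non-issues: the $E_2$-term in the statement is $R^q(u_*\pi_*)$, the derived functor of the whole composite, so no exactness of $\pi_*$ (nor the factorization $R^q(u_*\pi_*)=(R^qu_*)\pi_*$) is required, and the injectivity-preservation of $u_*\pi_*$ follows directly from exactness of its left adjoint ($u^*$ preserves finite limits and all colimits, $\pi^*$ is restriction of scalars), with no claim about preservation of projectives needed.
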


\begin{proof}
The pair of tensor-hom adjoints gives an equivalence $\text{Hom}_{{\mathbb Z}Quo(S_X)}({\mathbb Z}, \  )\circ \pi_* \cong \text{Hom}_{{\mathbb Z}(X, S_X)}({\mathbb Z}, \ )$
of functors. 
The same argument as in the proof of \cite[8.17 Proposition]{J} allows us to conclude that 
$\text{Hom}_{\mathsf{Mod}^{(\C, S)}}(\underline{{\mathbb Z}}, \  )\circ u_* \cong \text{Hom}_{\mathsf{Mod}^{\jmath(X, S_X)}}(\underline{{\mathbb Z}}, \ )$. 
Since $u^*$ and $\pi^*$ are exact, it follows from \cite[2.8.9 Proposition]{S} that the right adjoint $u_*\pi_*$ preserves injective objects. Then the Grothendieck spectral sequence gives the one in the assertion. 
\end{proof}

\begin{rem}
Let $(X, S)$ be a coherent configuration. Then the map $\pi : \jmath (X, S) \to c[\jmath (X, S)]$ defined in the proof of Theorem \ref{thm:module} is admissible; see 
\cite[Definition 6.2]{K-Matsuo}.  By virtue of \cite[Proposition 6.7]{K-Matsuo}, 
we see that the group $Quo(S_X)$ is replaceable with the groupoid $c[\jmath (X, S)]$ in the diagram (4.1). 
Observe that $\jmath (X, S)$ is a natural schemoid; see Remark \ref{rem:mildness} (ii). 
Thus, Theorem \ref{thm:LSS} remains valid after replacing the AS to a more general coherent configuration. 
\end{rem}

As remarked in \cite[Remark 6.6]{F}, in general, 
a morphism $u : (X, S_X) \to (Y, S_Y)$ of ASs does {\it not} induce a morphism of algebras between the Bose--Mesner 
algebras naturally. However, the morphism $u$ induces a group homomorphism $Quo(S_X) \to Quo(S_Y)$; 
see Proposition \ref{prop:H}. Thus we have a variant of Theorem \ref{thm:LSS}. 

\begin{cor}\label{cor:Change-of-rings} Let $u : (X, S_X) \to (Y, S_Y)$ be a morphism of ASs and $M$ a left ${\mathbb Z}(X, S_X)$\text{-}module. Then, 
there exists a first quadrant spectral sequence converging to $\text{\em Ext}_{{\mathbb Z}(X, S_X)}^*({\mathbb Z}, M)$ with 
$$
E_2^{p,q} \cong H^p(Quo(S_Y), (R^q u_*\pi_*)(M)) 
= \text{\em Ext}_{{\mathbb Z}Quo(S_Y)}^p({\mathbb Z}, \text{\em Ext}_{{\mathbb Z}(X, S_X)}^q({\mathbb Z}Quo(S_Y), M) ). 
$$
\end{cor}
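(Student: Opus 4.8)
The plan is to deduce Corollary \ref{cor:Change-of-rings} from Theorem \ref{thm:LSS} by taking the colored category $(\C, S)$ there to be the schemoid $\jmath(Y, S_Y)$. First I would apply the functor $\jmath : \mathsf{AS} \to q\mathsf{ASmd}$ of the diagram (2.2) to $u$, obtaining a morphism $\jmath(u) : \jmath(X, S_X) \to \jmath(Y, S_Y)$ of schemoids, hence of colored categories. Feeding this into Theorem \ref{thm:LSS}, with $\pi_*$ the coinduction appearing in the diagram (4.1) and $u_* := \jmath(u)_*$ the direct image of the induced geometric morphism, yields a first quadrant spectral sequence converging to $\text{Ext}_{{\mathbb Z}(X, S_X)}^*({\mathbb Z}, M)$ with $E_2^{p,q} \cong H^p(\jmath(Y, S_Y), (R^q u_*\pi_*)(M))$. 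It then remains only to rewrite this $E_2$-term.

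Next I would replace $H^p(\jmath(Y, S_Y), -)$ by the group cohomology $H^p(Quo(S_Y), -)$. By Proposition \ref{prop:H}, $c[\jmath(Y, S_Y)] \cong Quo(S_Y)$; since $\jmath(Y, S_Y)$ is a natural schemoid (Remark \ref{rem:mildness}) with $\sharp ob(c[\jmath(Y, S_Y)]) = 1$, Theorem \ref{thm:module} together with Remark \ref{rem:equivalences} gives an equivalence $\mathsf{Mod}^{\jmath(Y, S_Y)} \simeq {\mathbb Z}Quo(S_Y)\text{-}\mathsf{Mod}$ carrying the constant sheaf $\underline{{\mathbb Z}}$ to the trivial module ${\mathbb Z}$. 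Hence, for $N$ in $\mathsf{Mod}^{\jmath(Y, S_Y)}$ with image $\overline{N}$, one has $H^p(\jmath(Y, S_Y), N) = \text{Ext}^p_{\natural\mathsf{Mod}^{\jmath(Y, S_Y)}}(\underline{{\mathbb Z}}, N) \cong \text{Ext}^p_{{\mathbb Z}Quo(S_Y)}({\mathbb Z}, \overline{N}) = H^p(Quo(S_Y), \overline{N})$, which is the first isomorphism asserted in the statement.

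Then I would compute the coefficient functor $R^q(u_*\pi_*)$. Under the equivalences of Remark \ref{rem:equivalences}, $\jmath(u)^*$ corresponds to restriction along the group homomorphism $Quo(u) : Quo(S_X) \to Quo(S_Y)$ supplied by the functoriality of $c[\jmath(-)] \cong Quo$ in Proposition \ref{prop:H}; so its right adjoint $u_* = \jmath(u)_*$ corresponds to coinduction $\text{Hom}_{{\mathbb Z}Quo(S_X)}({\mathbb Z}Quo(S_Y), -)$, while $\pi_* = \text{Hom}_{{\mathbb Z}(X, S_X)}({\mathbb Z}Quo(S_X), -)$. Because coinduction along a composite of ring maps is the composite of the coinductions (equivalently, by the tensor--hom adjunction), $u_*\pi_* \cong \text{Hom}_{{\mathbb Z}(X, S_X)}({\mathbb Z}Quo(S_Y), -)$, where ${\mathbb Z}Quo(S_Y)$ carries the left ${\mathbb Z}(X, S_X)$-module structure through the composite algebra map ${\mathbb Z}(X, S_X) \to {\mathbb Z}Quo(S_X) \to {\mathbb Z}Quo(S_Y)$ and the ${\mathbb Z}Quo(S_Y)$-module structure through right multiplication. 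Since the forgetful functor from ${\mathbb Z}(X, S_X)\text{-}\mathsf{Mod}$ to abelian groups is exact, taking an injective resolution of $M$ shows $R^q(u_*\pi_*)(M) \cong \text{Ext}^q_{{\mathbb Z}(X, S_X)}({\mathbb Z}Quo(S_Y), M)$ as a ${\mathbb Z}Quo(S_Y)$-module. Combining the three steps gives $E_2^{p,q} \cong H^p(Quo(S_Y), \text{Ext}^q_{{\mathbb Z}(X, S_X)}({\mathbb Z}Quo(S_Y), M)) = \text{Ext}^p_{{\mathbb Z}Quo(S_Y)}({\mathbb Z}, \text{Ext}^q_{{\mathbb Z}(X, S_X)}({\mathbb Z}Quo(S_Y), M))$, and the spectral sequence still converges to $\text{Ext}^*_{{\mathbb Z}(X, S_X)}({\mathbb Z}, M)$, as required.

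The argument is essentially bookkeeping resting on Theorem \ref{thm:LSS}, and I do not expect a serious obstacle; the one point requiring genuine care is the third step — making sure that, under the chain of equivalences, the direct image $\jmath(u)_*$ is exactly coinduction along $Quo(u)$ rather than its left adjoint, and that the two a priori different routes from $(X, S_X)$ to $Quo(S_Y)$, through $Quo(S_X)$ versus through $(Y, S_Y)$ and then its thin residue, induce the same algebra map ${\mathbb Z}(X, S_X) \to {\mathbb Z}Quo(S_Y)$. Both are guaranteed by the naturality of the quotient maps $(-) \to Quo(-)$ coming from the functoriality in Proposition \ref{prop:H}, but this is where a direction-of-adjoint slip would be easiest to make.
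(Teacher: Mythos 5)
Your derivation is correct and matches the paper's (essentially unwritten) argument: the corollary is presented as an immediate variant of Theorem \ref{thm:LSS}, obtained from the induced group homomorphism $Quo(S_X)\to Quo(S_Y)$ and identified afterwards as the change-of-rings spectral sequence for the composite algebra map ${\mathbb Z}(X,S_X)\to{\mathbb Z}Quo(S_X)\to{\mathbb Z}Quo(S_Y)$, which is exactly what your three steps unwind to. Your care about the direction of the adjoint ($u_*$ being coinduction, not induction) and about the coincidence of the two routes to ${\mathbb Z}Quo(S_Y)$ is well placed, and both points check out.
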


The spectral sequence above is nothing but the change-of-rings spectral sequence. 

\begin{ex} Let $(X, S)$ be the Hamming scheme $H(n, 2)$ of binary codes and $M$ a left ${\mathbb Z}H(n, 2)$\text{-}module. 
By virtue of Proposition \ref{prop:H} and the proof of Corollary \ref{cor:trivial_one}, we have 

\begin{assertion}\label{assertion:Hamming_scheme} 
$Quo(S) \cong {\mathbb Z}/2$. 
\end{assertion}

Let $\{E_r^{*, *}, d_r\}$ be the spectral sequence in Corollary \ref{cor:Change-of-rings} for the identity on $(X, S)$. 
Then, the periodic resolution of ${\mathbb Z}$ deduces that 
$$
E_2^{1,0} \cong \text{Ker}((1+t) : \widetilde{M} \to \widetilde{M})/ (t-1)\widetilde{M}, 
$$ 
where 
$\widetilde{M}$ denotes the left ${\mathbb Z}({\mathbb Z}/2)$-module 
$\text{Hom}_{{\mathbb Z}H(n, 2)}({\mathbb Z}({\mathbb Z}/2), M)$ and 
$t$ is the generator of ${\mathbb Z}/2$. 
Each element in $E_2^{1,0}$ is a permanent cycle. 
Then, we see that $E_2^{1,0}$ is a submodule of $\text{Ext}_{{\mathbb Z}H(n, 2)}^1({\mathbb Z}, M)$. 
\end{ex}


We conclude this article with observations and expectations for our work.

\begin{rem}\label{rem:assertions} (i) Thanks to Theorem \ref{thm:main}, we can define cohomology of colored categories;  
see Definition \ref{defn:cohomology}. 
Then, the cohomology is applicable to more general objects rather than schemoids. This means that important combinatorial data 
of a schemoid are ignored exactly in the definition. However, 
such data may characterize an algebraic property of the cohomology of a schemoid as a closed and orientable manifold 
possesses the Poincar\'e duality for the singular cohomology. \\
(ii) Theorems \ref{thm:main}, \ref{thm:module} and Proposition \ref{prop:H} 
enable us to be aware that an appropriate partition of morphisms in $\mathsf{Sets}$ is needed 
for giving {\it novel} cohomology for association schemes. The authors do not yet have a candidate of such a partition. 

On the other hand, instead of the category $\mathsf{Sets}$, if we consider $\mathsf{Cat}$ the category of small categories, 
then we have a functor category $\mathsf{Cat}^{(\C, S)}$ consisting of functors $\Theta : \C \to \mathsf{Cat}$ with 
$\Theta (f)\cong \Theta (g)$ if $f \sim_S g$; that is, there exists a natural isomorphism $\eta :  \Theta (f)\Rightarrow \Theta (g)$ in 
 $\mathsf{Cat}$. As for a morphism in $\mathsf{Cat}^{(\C, S)}$, it is defined to be a natural transformation $\nu : F \Rightarrow G$ which satisfies the condition that 
$\nu_x \cong \nu_y$ if $id_x \sim_S id_y$. 
We color morphisms of $\mathsf{Cat}$ so that morphisms $K$ and $H$ has the same color if there exists a natural isomorphism between them. An object in $\mathsf{Cat}^{(\C, S)}$ is regarded as a functor which preserves the coloring. 
 Thus, one might expect that the notion of a 2-topos in the sense of Street \cite{Street} is useful in the study of colored categories, 
 schemoids and association schemes. \\ 
(iii) Let $(\C, S)$ be a natural schemoid. In view of Proposition \ref{prop:H}, 
the monoid $c[(\C, S)]$ in Remark \ref{rem:monoids} may play an important role in representation theory 
for $(\C, S)$. Thus, in such consideration, it will be needed to introduce the notion of a {\it closed subset} in a schemoid; 
see \cite{Z} for a crucial role of the thin residue in the study of association schemes.   \\
(iv) We observe that a set of generators of the topos $\mathsf{Sets}^{(\C, S)}$ is described in the proof of Theorem \ref{thm:main}.  
Indeed, the generators are given by the representation functors in $\mathsf{Sets}^{c[(\C, S)]}$. 
Moreover, $\mathsf{Sets}^{(\C, S)}$ has {\it enough points}. An advantage of these facts is 
that we may use the generators to construct a topological space on which sheaves approximate 
$\mathsf{Sets}^{(\C, S)}$; see \cite{B-M, B-M2} and \cite[IV 1.1. Theorem]{M} 
for more details. 
One might expect to be able to develop homotopical algebra for colored categories 
with such sheaves.
\end{rem}

\noindent
{\it Acknowledgements.} 
The first author thanks Toshiki Aoki for discussing topoi, without which 
he could not relate schemoids to topoi. Authors are grateful to Akihide Hanaki 
for the precious discussion on representation theory of association schemes and for showing them Proposition \ref{prop:H}.  
They thank two referees for careful reading the previous version of this paper and giving valuable comments without which, 
in particular, it would not have been possible to revise  Theorem \ref{thm:main} and its proof correctly. 
In fact, Example \ref{ex:pullbacks} is due to one of the referees. 
A comment of another referee has allowed the authors to refine the main theorem with the geometric morphism. 

This research was partially supported by a Grant-in-Aid for Scientific
Research HOUGA JP16K13753 from Japan Society for the Promotion of Science.

\section{Appendix A}

In this section, we describe the proof of  Proposition \ref{prop:H}  due to Hanaki \cite{Hanaki_privatecom} 
for the reader. We use the same notation and terminology as in \cite{Z_book1}. 

We first recall the factor scheme $(X, S)^{{\bf O}^{\vartheta}(S)}$ of an association scheme $(X, S)$. 
Let $T$ be the thin residue ${\bf O}^{\vartheta}(S)$ of the association scheme $(X, S)$. 
Observe that $T$ is a subset of $S$ generated by $ss^*$ for $s\in S$: 
$T = \langle ss^* \!\mid \! s \in S \rangle$. Then, the result \cite[Theorem 2.3.4]{Z_book1} yields that 
the factor scheme $(X, S)^{{\bf O}^{\vartheta}(S)} = (X/T, S//T)$ is thin; that is, 
$S//T$ is a group with the complex product. 
By definition, an element in $X/T$ is a subset of $X$ of the form $xT = \cup_{\sigma \in T}x\sigma$, where 
$x\sigma = \{ y \in X \mid (x, y) \in \sigma \}$. Moreover, $S//T$ consists of elements $s^T := \{(xT, yT) \mid y \in xTsT \}$. 
Under the complex product, we have 
$s^Tt^T = u^T$ if $p_{st}^u \neq 0$.  Observe that $1^T = 1_{S//T}$ the unit of the group $S//T$ and $(s^*)^T = (s^T)^*$ the inverse of $s^T$. 

\begin{proof}[Proof of Proposition \ref{prop:H} {\em (}\cite{Hanaki_privatecom}{\em )}]  
Let $F$ be the free group generated by $S$ and $N$ the normal subgroup 
generated by elements of the form $\sigma\tau\mu^*$ with $p_{\sigma\tau}^\mu \neq 0$. 
This yields  
that the category $c[\jmath(X, S)]$ is isomorphic to the quotient group $F/N$; see Remark \ref{rem:monoids}. Thus,  
in order to prove the proposition, it suffices to show that $F/N \cong S//T$ as a group. 
We define a  map $h : F/N \to S//T$ by $f([\sigma]) = \sigma^T$. It is readily seen that $h$ is a well-defined epimorphism.  
For any $\sigma$ and $\tau$ in $S$, there exists $\mu \in S$ such that $p_{\sigma\tau}^\mu \neq 0$ 
and hence  $[\sigma][\tau] = [\mu]$ in $F/N$. This implies that 
each element $x$ in $F/N$ has a representative of the form $\mu$, where $\mu \in S$. 
Suppose that $[\mu]$ is in the kernel of $h$. Then $1^T = h([\mu]) = \mu^T$. 
It follows from \cite[Proposition 1.5.3]{Z_book1} that $\mu$ is in  $T$. Thus there exist 
$s_1, ..., s_r \in S$ such that $\mu \in s_1s_1^* \cdots s_rs_r^*$. This enables us to deduce that 
$$
[\mu] = [s_1][s_1^*] \cdots [s_r][s_r^*] = 1_{F/N}. 
$$
The first equality follows from the inductive use of the fact that $[\sigma][\tau] = [\mu]$ if $p_{\sigma\tau}^\mu \neq 0$. 
 It is immediate that $h$ is a functorial. We have the result. 
\end{proof}

\section{Appendix B}

In this section, we rewrite certain of the definitions described in the body of the manuscript 
with terminology which is used in \cite{Numata_2} for considering  
a colored category. Such descriptions may yield a topos taking the place of the functor category $\mathsf{Mod}^{(\C, S)}$ 
in the study of schemoids; see Remark \ref{rem:assertions} (ii).  

Let $\C$ be a small category consisting of sets $\C_0$ and $\C_1$ of objects and morphisms, respectively.  
Let $\eel : \C_1 \to I$ be a surjective map to a set $I$, which is called a {\it coloring map}. A colored category $(\C, S)$ is nothing but a category $\C$ endowed with 
a partition $S=\{ \eel^{(-1)}(i) \mid i\in I \}$ of $\C_1$ for some coloring map $\eel$. 
We then have a diagram consisting of solid arrows
$$
\xymatrix@C15pt@R15pt{
\C_1\times_{\C_0}\C_1 \ar[rr]^-{\circ} \ar[d]_{\eel \times \eel}&  & \C_1 \ar[d]^{\eel} \ar@<0.5ex>[rr]^-{s} \ar@<-0.5ex>[rr]_-{t} & &\C_0 \\
I \times I \ar@{..>}[r] & 2^I & I \ar[l]^-c\\
}
$$
in which the upper line is the usual diagram which explains the small category $\C$, where 
$c$ denotes the canonical injective map to the power set. We see that 
the colored category $(\C, S)$ is a schemoid if and only if for a triple $i, j, k \in  I$ and for any morphisms $f, g \in \eel^{-1}(i)$, one has a bijection 
$$\circ^{-1}(f) \cap (\eel \times \eel)^{-1}(j, k)\cong \circ^{-1}(g) \cap (\eel \times \eel)^{-1}(j, k). $$
The complex product of schemoid which is defined by the same way as in an association scheme gives the dots arrow; see \cite{Z_book1}  
and also  the comment after \cite[Lemma 6.3]{K-Matsuo}.  We observe that the square in the diagram above is commutative if the schemoid $(\C, S)$ is tame; see \cite[page 230]{K-Momose}. 


The definition of a natural colored category (Definition \ref{defn:mild_object}) is also rewritten  
with a coloring map.  
We first recall the Kronecker category or $2$-Kronecker quiver. 
The {\it Kronecker category} $1\xrightarrowrightarrow{\ttt}{\sss}0$,
denoted $\QQQ$,
is a category with two objects $1,0$ and with four morphisms
$\id_1,\id_0,\sss,\ttt$. 
Both of $\sss$ and $\ttt$ are
morphisms from $1$ to $0$.
A functor $G$ from $\QQQ$ to $\Sets$ 
can be identified with a digraph in the following manner:
$G(0)$ and $G(1)$ are the sets of vertices and edges, respectively.
Each $f\in G(1)$ is an edge from $G_\sss(f)$ to $G_\ttt(f)$.
Consider a small category $\C$ with a coloring map $\eel : \C_1 \to I=: I_1$.
Let $I_0$ be the set $\{\eel(\id_x) \mid  x\in\C_0 \}$.
We define a map $\eel_1 : \C_1 \to I_1$ by $\eel_1(f)=\eel(f)$.
We also define a map $\eel_0 : \C_0 \to I_0$ by $\eel_0(x)=\eel(\id_x)$.
Then it is a naturally colored category 
if and only if
there exist maps $\bar s$ and $\bar t$ from $I_1$ to $I_0$
such that 
\begin{align}
(\bar s\circ \eel_1)(f)&=(\eel_0\circ s)(f), \text{ and}
\label{nu:eq:diaram:s}
\\
 (\bar t\circ \eel_1)(f)&=(\eel_0\circ t)(f) 
\label{nu:eq:diaram:t}
\end{align}
for each morphism $f$ of $\C$.
In this case, 
we have a functor $G$ from $\QQQ$ to $\Sets$ defined by
$G(0)=I_0$, $G(1)=I_1$, $G_\sss=\bar s$ and $G_\ttt=\bar t$.
On the other hand,
 the small category $\C$ induces
 a functor $F$ from $\QQQ$ to $\Sets$ defined by
$F(0)=\C_0$, $F(1)=\C_1$, $F_\sss=s$ and $F_\ttt=t$.
The equations (\ref{nu:eq:diaram:s})  and (\ref{nu:eq:diaram:t})
mean 
that $\eel_{\bullet}$ is a natural transformation from $F$ to $G$
whose values at the objects $0$ and $1$ of $\QQQ$ 
are the maps $\eel_0$ and $\eel_1$, respectively.


\begin{thebibliography}{99}
\bibitem{B-I} E. Bannai and T. Ito, Algebraic Combinatorics I: Association Schemes, Benjamin-Cummings Lecture Note Ser. {\bf 58}, 
London, Benjamin, 1984. 
%
\bibitem{B-M} C. Butz and I. Moerdijk, Representing topoi by topological groupoids. J. Pure Appl. Algebra {\bf 130} (1998), 223--235.
%
\bibitem{B-M2}  C. Butz and I. Moerdijk, Topological representation of sheaf cohomology of sites. Compositio Math. {\bf 118} (1999), 217--233.
%
%
\bibitem{F} C. French, Functors from association schemes,
	J. Combin.\ Theory Ser.\ A {\bf 120} (2013), 1141--1165.
%
\bibitem{H} A. Hanaki,  A category of association schemes, J. Combin.\ Theory Ser.\ A {\bf 117} (2010), 1207--1217. 
%
\bibitem{Hanaki_privatecom} A. Hanaki, A note on finite groups defined by an association scheme, private communications, 2016. 
%
\bibitem{H-Y} A. Hanaki and M. Yoshikawa, 
On modular standard modules of association schemes. J. Algebraic Combin. {\bf 21} (2005), 269--279.
%
\bibitem{Higman} D.G. Higman, Coherent configurations, Part I. Ordinary representation theory.\ Geom.\ Dedicata, {\bf 4} (1975), 1--32.
%
\bibitem{J} P.T. Johnstone, Topos theory, London Mathematical Society Monographs, Vol.\ 10. Academic Press, 
London-New York, 1977. 
%
\bibitem{K} K. Kuribayashi, On strong homotopy for quasi-schemoids, Theory and Applications of Categories, {\bf 30} (2015), 1--14. 
%
\bibitem{K-Matsuo} K. Kuribayashi and K. Matsuo, Association schemoids and their categories, 
Applied Categorical Structures, {\bf 23} (2015), 107--136.
%
\bibitem{K-Momose} K. Kuribayashi and Y. Momose, On Mitchell's embedding theorem for a quasi-schemoid, Journal of Algebra, {\bf 458} (2016), 222--248.
%
\bibitem{Mac} S. Mac Lane, Categories for the working mathematician. Second edition. Graduate Texts in Mathematics, 
5. Springer-Verlag, New York, 1998.
%
\bibitem{M-M} S. Mac Lane and I. Moerdijk, Sheaves in geometry and logic. 
A first introduction to topos theory,  Universitext. Springer-Verlag, New York, 1994.
%
\bibitem{M} I. Moerdijk, Classifying spaces and classifying topoi, Lecture Notes in Mathematics, 1616. 
Springer-Verlag, Berlin, 1995.
%
\bibitem{Numata} Y. Numata, Construction of schemoids from posets, preprint (2016), arXiv:1603.00601. 
%
\bibitem{Numata_2} Y. Numata, On functions between categories with colored morphisms, preprint (2016),  
arXiv:1602.04553
%
\bibitem{P-Z} I. Ponomarenko and P.-H. Zieschang, Preface, European Journal of Combinatorics, {\bf 30} (2009), 1387--1391. 
%
\bibitem{Street} R. Street, Two-dimensional sheaf theory, J. Pure Appl.\ Algebra {\bf 23} (1982), 251--270. 
%
\bibitem{S}J.R. Strooker, Introduction to categories, homological algebra and sheaf cohomology, Cambridge University Press, 2009. 
%
\bibitem{Z_book1} P.-H. Zieschang, An algebraic approach to association schemes, Lecture Notes in Math., 
Springer-Verlag, Berlin, 1996. 
%
\bibitem{Z_book} P.-H. Zieschang, Theory of association schemes, Springer Monographs in Math., 
Springer-Verlag, Berlin, 2005. 
%
\bibitem{Z}P.-H. Zieschang, Association schemes in which the thin residue is a finite cyclic group, Journal of Algebra, {\bf 324} (2010), 3572--3578. 
\end{thebibliography}
\end{document}